\definecolor{customdarkred}{RGB}{150,0,0}
\definecolor{customdarkgreen}{RGB}{0,150,0}
\definecolor{customdarkblue}{RGB}{0,0,150}
\newcommand{\R}{\mathbb{R}}
\newtheorem{theorem}{Theorem}[section]
\newtheorem{proposition}[theorem]{Proposition}
\newtheorem{lemma}[theorem]{Lemma}
\theoremstyle{definition}
\theoremstyle{remark}
\numberwithin{equation}{section}
\numberwithin{figure}{section}
\def\beq{\begin{equation}} 
\def\eeq{\end{equation}}
\def\beqn{\begin{eqnarray*}}
\def\eeqn{\end{eqnarray*}}
\def\Bitem{\begin{itemize}\setlength{\itemsep}{.2in}}
\def\bitem{\begin{itemize}\setlength{\itemsep}{.05in}}
\def\eitem{\end{itemize}}
\def\Benum{\begin{enumerate}\setlength{\itemsep}{.2in}}
\def\benum{\begin{enumerate}\setlength{\itemsep}{.05in}}
\def\eenum{\end{enumerate}}
\def\bmult{\begin{multline*}}
\def\emult{\end{multline*}}
\def\bcenter{\begin{center}}
\def\ecenter{\end{center}}
\def\bframe{\begin{frame}}
\def\eframe{\end{frame}}
\newcommand{\thmref}[1]{Theorem~\ref{thm:#1}}
\newcommand{\lemref}[1]{Lemma~\ref{lem:#1}}
\newcommand{\secref}[1]{Section~\ref{sec:#1}}
\DeclareMathOperator{\diam}{diam}
\DeclareMathOperator{\rank}{rank}
\def\cE{\mathcal{E}}
\def\cG{\mathcal{G}}
\def\cV{\mathcal{V}}
\def\bx{\mathbf{x}}
\def\by{\mathbf{y}}
\def\bz{\mathbf{z}}
\def\bbP{\mathbb{P}}
\def\bbR{\mathbb{R}}
\def\eps{\varepsilon}
\def\1{\mathbbm{1}}
\title{Stability of Sequential Lateration and of Stress Minimization\\ in the Presence of Noise}
\author[1,2]{Ery Arias-Castro}
\author[1]{Siddharth Vishwanath} 
\affil[1]{\small Department of Mathematics, University of California, San Diego} 
\affil[2]{\small Halıcıoğlu Data Science Institute, University of California, San Diego}
\date{}
\begin{document}
\maketitle
\thispagestyle{empty}

\begin{abstract}
Sequential lateration is a class of methods for multidimensional scaling where a suitable subset of nodes is first embedded by some method, e.g., a clique embedded by classical scaling, and then the remaining nodes are recursively embedded by lateration. A graph is a lateration graph when it can be embedded by such a procedure. We provide a stability result for a particular variant of sequential lateration. We do so in a setting where the dissimilarities represent noisy Euclidean distances between nodes in a geometric lateration graph. 
We then deduce, as a corollary, a perturbation bound for stress minimization. 
To argue that our setting applies broadly, we show that a (large) random geometric graph is a lateration graph with high probability under mild conditions, extending a previous result of Aspnes et al (2006). 
\end{abstract}

\section{Introduction}

In multidimensional scaling (MDS), we are provided with some pairwise dissimilarities between a number of items, and the general goal is to embed these items as points in a Euclidean space of given dimension in such a way that the resulting Euclidean distances reproduce, as faithfully as possible, the dissimilarities.
MDS is a well-studied problem in psychometrics \cite{borg2005modern}, mathematics and computer science (embedding of metric spaces) \cite{blumenthal1953theory}, in optimization (Euclidean distance matrix completion) \citep{laurent2001matrix}, and engineering (sensor network localization) \cite{mao2007wireless}, and it is an integral part of multivariate statistical analysis \cite{seber2009multivariate, MVA} and unsupervised machine learning \cite{hastie2009elements}. MDS is closely related to the problem of graph drawing \cite{battista1998graph, klimenta2012extending}.

\subsection{Setting} 
\label{sec:setting}
More formally, we are given an undirected graph $\cG = (\cV, \cE)$, with node set $\cV = [n] := \{1, \dots, n\}$ and edge set $\cE \subset \cV \times \cV$, together with non-negative weights on the edges. The weight on $(i, j) \in \cE$ is referred to as the dissimilarity between $i$ and $j$, and denoted $d_{ij}$. The (possibly incomplete) matrix $D = (d_{ij})$ stores these dissimilarities. Based on this information, we seek to embed the nodes into a Euclidean space of given dimension, denoted $p$, as accurately as possible. Specifically, we seek a configuration $y_1, \dots, y_n \in \bbR^p$ such that $\|y_i - y_j\| \approx d_{ij}$ for all or most $(i,j) \in \cE$. 
A notion of stress, for example, the s-stress of Takane~et~al.~\cite{takane1977nonmetric} defined as
\begin{align}
\label{stress}
\sum_{(i,j) \in \cE} \big(\|y_i - y_j\|^2 - d_{ij}^2\big)^2,
\end{align}
offers a way to quantify the accuracy of the embedding. 
Throughout, the dimension $p$ will be assumed given and $\|\cdot\|$ will denote the Euclidean norm in $\bbR^p$.

We say that the graph is realizable (in dimension $p$) if there is a point set $y_1, \dots, y_n \in \bbR^p$ such that $\|y_i - y_j\| = d_{ij}$ for all $(i,j) \in \cE$, or in words, if there is a configuration with zero stress.
In this paper we are most interested in the noisy realizable situation in which
\begin{align}
\label{setting}
d_{ij}^2 = \|x_i - x_j\|^2 + \eps_{ij}, \quad (i,j) \in \cE,
\end{align}
where $\{x_1, \dots, x_n\} \in \bbR^p$ will be referred to as the {\em latent configuration}\footnote{Note that the latent configuration is only determined up to a rigid transformation, as we do not assume that any anchor is available. However, this duplicity does not cause any trouble.}
 and $\{\eps_{ij}: (i,j) \in \cE\}$ represents measurement noise, possibly stochastic.
This additive noise model is considered in a number of places, e.g., \cite{anderson2010formal, javanmard2013localization, li2020central}. It includes, as a special case, the following multiplicative noise model 
\begin{align}
\label{setting mult}
d_{ij} = (1+\eta_{ij}) \|x_i - x_j\|, \quad (i,j) \in \cE,
\end{align}
by simply setting $\eps_{ij} = 2 \eta_{ij} \|x_i - x_j\| + \eta_{ij}^2 \|x_i - x_j\|^2$ in \eqref{setting}. Although the model \eqref{setting} is in principle completely general, in our results we will bound the error terms. It is possible to study the problem under more general assumptions as recently done in \cite{kroshnin2022infinite, lim2022classical}, but the model above is most appropriate for our purposes as will become clear below. 

\subsection{Methods}
\label{sec:methods}
A wide array of approaches have been proposed to tackle this problem, starting with {\em classical scaling}, the oldest and still the most popular method, proposed by Togerson \cite{torgerson1958theory, torgerson1952multidimensional} and further developed by Gower \cite{gower1966some}, with roots in a mathematical inquiry by Young \& Householder \cite{young1938discussion} into necessary and sufficient conditions ``for a set of numbers to be the mutual distances of a set of real points in Euclidean space'' --- to quote the abstract of their cornerstone paper.
Kruskal \cite{kruskal1964nonmetric, kruskal1964multidimensional} formulated the problem as minimizing a notion of stress that he suggested for that purpose --- same as \eqref{stress} but without the squares inside the brackets. Many other optimization approaches have been tried, including second order methods \cite{kamada1989algorithm}, as well as other Newton and quasi-Newton variant procedures \cite{kearsley1998solution, glunt1993molecular}; augmentation and majorization \cite{de1975alternating, heiser1988multidimensional}, which include the SMACOF algorithm \cite{de1977applications, de2009multidimensional, mair2022more}, itself closely related to the fixed point iteration approach of Guttman \cite{guttman1968general}; incremental and multigrid approaches \cite{cohen1997drawing, williams2004steerable, bronstein2006multigrid}; divide-and-conquer or patch-stitching algorithms \cite{yang2006fast, tzeng2008multidimensional, koren2005patchwork, cucuringu2012sensor, singer2008remark, zhang2010rigid, drusvyatskiy2017noisy, krislock2010explicit, shang2004improved, hendrickson1995molecule}; semidefinite programming (SDP) formulations where the constraint on the embedding dimension is removed \cite{alfakih1999solving, biswas2006semidefinite, biswas2006semidefinite_based, javanmard2013localization, weinberger2006graph, so2007theory, drusvyatskiy2017noisy, cayton2006robust}; and the completion of the dissimilarity matrix by graph distances before applying a method like classical scaling \cite{kruskal1980designing, priyantha2003anchor, shang2003localization, niculescu2003dv}. 
See the book by Borg \& Groenen \cite{borg2005modern} and the PhD thesis of Klimenta \cite[Ch 2, 3]{klimenta2012extending} for partial reviews of the literature.

\subsubsection*{Sequential lateration} 
We place our attention on {\em sequential lateration}, which is an approach in which a suitable subset of nodes is first embedded by some method --- e.g., a clique embedded by classical scaling --- and then the remaining nodes are recursively embedded by lateration \cite{kearsley1998solution, aspnes2006theory, fang2009sequential, eren2004rigidity, laurent2001polynomial, bakonyi1995euclidian, grone1984positive}.
 
{\em Lateration} is the problem of locating a point based on its (possibly inaccurate) distances to a set of given points often referred to as anchors, beacons or landmarks. The problem is known under different names, including `trilateration' or `multilateration', or simply `lateration', in engineering \citep{chrzanowski1965theoretical, glossary1994, fang1986trilateration, yang2009indoor, savvides2001dynamic, navidi1998statistical}, while `external unfolding' is favored in psychometrics \cite{borg2005modern, carroll1972individual}.  

Aspnes~et~al.~\cite{aspnes2006theory}, developing ideas already present in their prior work \cite{eren2004rigidity}, introduce\footnote{They use `trilateration graph' and `trilaterative ordering' as they focus on the case of dimension $p = 2$.} the notion of {\em lateration graph} (in dimension $p$), which they define as a graph with $n \ge p+1$ vertices that admits an ordering of its vertices, say $v_1, \dots, v_n$, such that the subgraph induced by $v_1, \dots, v_{p+1}$ is complete and, for each $j > p+1$, $v_j$ is connected to at least $p+1$ vertices among $v_1, \dots, v_{j-1}$ --- they call this a {\em laterative ordering} (in dimension $p$).
They show in \cite[Th 10]{aspnes2006theory} that the problem of \secref{setting} is solvable in polynomial time by sequential lateration in the realizable situation \eqref{setting} (with $\eps_{ij} \equiv 0$) when the latent points $x_1, \dots, x_n$ are in general position and the graph $(\cV, \cE)$ is a lateration graph. 
We say that a configuration is in general position if any $(p+1)$-tuple from the configuration spans the entire space.

\subsection{Contribution and content}
Our main contribution is establishing a perturbation bound for sequential lateration. Such a bound helps us understand how the performance of a method degrades with the presence of noise. While, as already mentioned, sequential lateration is exact in the realizable setting when the latent points are in general position and the graph is a lateration graph, our study provides an understanding of how the method behaves in the noisy realizable setting \eqref{setting}. 

As our second contribution, we use our perturbation bound for sequential lateration to derive a perturbation bound for stress minimization in the same setting of a lateration graph. Although stress minimization is not an algorithm per se, we show that the set of configurations that minimize the stress \eqref{stress} is stable in the presence of noise in the sense that any minimizing configuration is within a distance (up to rigid transformations) to the latent configuration in \eqref{setting} controlled in terms of the amount of noise. 

Only a few perturbation bounds exists in the MDS literature. For classical scaling, some partial results were developed early on by Sibson \cite{sibson1979studies} and later revisited by de Silva \& Tenenbaum \cite{de2004sparse}, but a true perturbation bound was only established recently in \cite{arias2020perturbation}, where perturbation bounds for the completion by graph distance method of Kruskal \& Seery \cite{kruskal1980designing} and the SDP method of Weinberger~et~al.~\cite{weinberger2006graph} --- in the context of manifold learning in the form of isomap \cite{Tenenbaum00ISOmap} and maximum variance unfolding \cite{weinberger2006introduction} --- were also obtained.  
Similarly, some perturbative results were derived in \cite{de2004sparse} for lateration, but a true perturbation bound was only achieved in \cite{arias2020perturbation} (to our knowledge).
Moore~et~al.~\cite{moore2004robust}, inspired by the earlier work of Eren~et~al.~\cite{eren2004rigidity}, propose a method for sequential trilateration and carry out a very limited mathematical analysis, restricting themselves to analyzing the probability of a gross error or `flip' in one trilateration step. We are not aware of any other results.

Our perturbation bound for sequential lateration was perhaps anticipated by Anderson~et~al.~\cite{anderson2010formal}, who discuss this as an open question in their last section: 

\begin{quote}
``An important problem, linked but separate from the one treated in this paper, is how (numerically) to solve the minimization problem. The corresponding problem in the noiseless case is how to perform localization. For a localization problem to be solvable in polynomial time, it is generally necessary that some special structure holds for the graph; for example, in the case of trilateration graphs, localization can be done in linear time with suitable anchors \cite{anderson2009graphical}. We would expect, although we have no formal proof, that such geometries will also be important in ensuring that a noisy localization problem is computationally tractable.''
\end{quote}

Perturbation bounds and, more generally, a better understanding of the MDS problem under noise, were open problems discussed at length by Mao~et~al.~\cite{mao2007wireless} in their well-cited review paper of the engineering literature on the topic.
We leverage our perturbation bound for sequential lateration to obtain another result that contributes to that endeavor: we show, in the same context, that any configuration that minimizes the stress \eqref{stress} is necessarily close to the latent configuration. In doing so, we recover a result of Anderson~et~al.~\cite{anderson2010formal} in the special case of a lateration graph.  

Although not all graphs are lateration graphs, the setting covers the main stochastic model used in the literature, that of a random geometric graph. While this was already known to Eren~et~al.~\cite{eren2004rigidity} and Aspnes~et~al.~\cite{aspnes2006theory}, as our third contribution, we provide a much more general result, showing that a large random geometric graph is a lateration graph with high probability under very mild assumptions on the underlying sampling distribution. 

The remainder of the paper is organized as follows.
In \secref{sequential lateration}, we derive a perturbation bound for sequential lateration. 
In \secref{stress minimization}, we obtain as a corollary a perturbation bound for stress minimization. This is placed in the broader context of {\em rigidity theory}. 
In \secref{rgg}, we provide rather mild conditions under which a large random geometric graph is a lateration graph with high probability. 
Some numerical experiments meant to illustrate the theory are presented in \secref{numerics}. 
And \secref{discussion} is a discussion section.

\section{Perturbation bound for sequential lateration}
\label{sec:sequential lateration}

The particular variant of sequential lateration that we work with is based on classical scaling and what we call {\em classical lateration}, a method for lateration that was originally proposed by Gower~\cite{gower1966some} and later rediscovered by de~Silva~\&~Tenenbaum~\cite{de2004sparse}, and is the analog of classical scaling for the lateration problem.

The procedure works as follows.
For each $(p+1)$-tuple of nodes within $\cV = [n]$, if it is complete, meaning that the $(p+1)$-tuple forms a clique, we embed it by classical scaling; we then recursively embed by classical lateration any node that is neighbor to at least $p+1$ nodes that have already been embedded. We can think of two main variants: in the `first' variant, we stop at the first full embedding achieved in this manner; in the `best' variant, we go through all full embeddings and select the one with smallest stress \eqref{stress}. Both variants run in polynomial time, although the `best' variant is prohibitively expensive to run in practice, having a complexity of order {$\asymp n^{p+2}$} since there are $(p+1)$-tuples to go through and, for each of them, running the sequential lateration has complexity $O(n)$. 
Our perturbation bound applies to either variant, and any other variant `in between'.  

\begin{theorem}
\label{thm:sequential lateration}
In the context of \secref{setting}, consider a noisy realizable situation as in \eqref{setting} in which the network structure $(\cV, \cE)$ is a lateration graph and the latent configuration is in general position. Then, there are $\sigma > 0$ and $A>0$, continuous in $(\|x_i-x_j\| : (i,j) \in \cE)$, such that, if $\sum_{(i,j) \in \cE} \eps_{ij}^2 \le \sigma^2$, sequential lateration outputs an embedding $y_1, \dots, y_n$ satisfying 
\begin{align}
\label{lateration1}
\min_{g} \sum_{i \in [n]} \|y_i - g(x_i)\|^2 \le A {\sum_{(i,j) \in \cE} \eps_{ij}^2},
\end{align}
where the minimization is over the rigid group of transformations of $\bbR^p$.
\end{theorem}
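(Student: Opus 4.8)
The plan is to build the bound out of two local perturbation results and then propagate the error along a laterative ordering. The two building blocks are (i) a stability bound for classical scaling applied to the seed clique, and (ii) a stability bound for a single step of classical lateration; both are available from \cite{arias2020perturbation}, and both are naturally phrased in terms of perturbations of \emph{squared} distances, which is exactly the quantity $\eps_{ij}$ controls since $d_{ij}^2 = \|x_i - x_j\|^2 + \eps_{ij}$. First I would fix a laterative ordering $v_1, \dots, v_n$ realizing the lateration-graph structure, so that $v_1, \dots, v_{p+1}$ form a clique and each $v_j$ with $j > p+1$ has at least $p+1$ already-embedded neighbors $N_j \subset \{v_1, \dots, v_{j-1}\}$. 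Because the latent configuration is in general position, the seed clique is a nondegenerate simplex and every anchor set $\{x_k : k \in N_j\}$ affinely spans $\bbR^p$; these nondegeneracy facts are what make both building blocks applicable with finite stability constants that are continuous in the distances.

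I would run the whole argument in the coordinate frame set by the seed clique. Classical scaling of the seed returns an embedding determined only up to a rigid motion, so I would choose the rigid transformation $g$ that best aligns the latent clique to it; then building block (i) gives $\sum_{l \le p+1} \|y_{v_l} - g(x_{v_l})\|^2 \le C_0 \sum_{\mathrm{clique}} \eps_{ij}^2$ once $\sigma$ is small. All subsequent lateration steps act on the already-placed $y$-coordinates and the noisy distances, and classical lateration is equivariant under rigid motions; hence $g(x_{v_j})$ is the exact lateration of the noiseless distances from the anchors $g(x_k)$, while $y_{v_j}$ is the lateration of the noisy distances from the perturbed anchors $y_k$. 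This equivariance is precisely what lets a single global $g$ serve for all nodes. Building block (ii) then yields, at each step, an inequality of the form
\begin{align}
\|y_{v_j} - g(x_{v_j})\|^2 \le \kappa_j \Big( \sum_{k \in N_j} \|y_k - g(x_k)\|^2 + \sum_{k \in N_j} \eps_{v_j, k}^2 \Big),
\end{align}
with $\kappa_j$ depending continuously on the conditioning of the anchor configuration. Writing $e_l = \|y_{v_l} - g(x_{v_l})\|^2$ and $S_j = \sum_{l \le j} e_l$, this reads $S_j \le (1 + \kappa_j) S_{j-1} + \kappa_j \sum_{k \in N_j} \eps_{v_j, k}^2$, and unrolling the recursion bounds $S_n$ by a combination of products of the $(1+\kappa_j)$ factors times $\sum_{(i,j) \in \cE} \eps_{ij}^2$ (each edge being used at most once as a distance-to-anchor). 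For a fixed graph this aggregate constant $A$ is finite, and since each $\kappa_j$ and $C_0$ is continuous in the true distances, so is $A$; since $\min_g$ only improves the left-hand side, this gives \eqref{lateration1}.

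The main obstacle is the circularity between the smallness of the noise and the validity of the per-step constants. The bound (ii) holds only while the perturbed anchors stay near the true ones — close enough to remain full-dimensional and to keep $\kappa_j$ near its value $\kappa_j^0$ at the true configuration — yet the anchor perturbations are themselves the accumulated errors $S_{j-1}$ that the recursion is trying to control. I would resolve this by bootstrapping: choose $\sigma$ small enough that, step by step, the accumulated error $S_{j-1} \le A \sigma^2$ stays below the threshold at which step $j$ is stable with, say, $\kappa_j \le 2\kappa_j^0$. Because the graph is finite this is a finite collection of conditions on $\sigma$, each met for $\sigma$ small, and general position (an open condition) keeps every threshold and every $\kappa_j^0$ bounded and continuous in the distances, so a valid $\sigma > 0$ exists and is continuous as claimed. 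Finally, the same argument applies verbatim to every laterative ordering the procedure might use; taking $A$ to be the maximum and $\sigma$ the minimum of the resulting constants over the finitely many orderings, the bound holds for whichever embedding is returned, covering both the \emph{first} and the \emph{best} variants and anything in between.
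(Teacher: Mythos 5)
Your proposal is correct and follows essentially the same route as the paper's proof: the same two perturbation lemmas from \cite{arias2020perturbation} (classical scaling for the seed clique, classical lateration for each subsequent node), the same induction along the laterative ordering with a per-step recursion whose unrolled constant grows like a product of per-step factors, the same resolution of the noise-threshold circularity via step-by-step smallness conditions on $\sigma$, and the same continuity argument for the constants. The only (minor) differences are presentational: you invoke equivariance of lateration where the paper simply takes the aligning rigid motion to be the identity without loss of generality, and you explicitly take a max/min of constants over the finitely many possible orderings, a point the paper handles implicitly by fixing one ordering up front.
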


The proof of \thmref{sequential lateration} occupies the rest of this section. It uses two perturbation bounds from \cite{arias2020perturbation}, one for classical scaling (\lemref{CS}) and one for classical lateration (\lemref{lateration}). 
Let the width of a point set be the the minimum distance between two parallel hyperplanes that completely enclose the point set. Recall that we assume the embedding dimension $p$ to be given.
We start with a stability result for classical scaling.\footnote{The definitions of diameter and width are different in \cite{arias2020perturbation}.}

\begin{lemma}[Corollary 2 in \cite{arias2020perturbation}]
\label{lem:CS}
Consider a configuration $x_1, \dots, x_m \in \bbR^p$ with diameter $\rho$ and width $\omega > 0$, and a complete set of dissimilarities $(d_{ij})$, and define  
\begin{align}
\label{CS1}
\eta^4 = \sum_{1 \le i < j \le m} \big(d_{ij}^2 - \|x_i - x_j\|^2\big)^2.
\end{align}
There is a constant $C_1 \ge 1$ such that, if $\eta \le \sqrt{m}(\omega/C_1)$, classical scaling with input dissimilarities $(d_{ij})$ returns a configuration $y_1, \dots, y_m$ satisfying
\begin{align} \label{mds1}
    {\min_{g} \sum_{i \in [m]} \|y_i - g(x_i)\|^2 \le \frac{C_1\rho^2}{m\omega^4} \cdot {\eta^4}},
\end{align}
where the minimization is over the rigid group of transformations.
\end{lemma}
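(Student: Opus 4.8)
The plan is to reduce the claim to standard spectral perturbation theory applied to the Gram matrix that classical scaling implicitly diagonalizes. Recall that classical scaling forms the doubly-centered matrix $\hat B = -\tfrac12 J \hat\Delta J$, where $\hat\Delta = (d_{ij}^2)$ and $J = I - \tfrac1m \mathbf{1}\mathbf{1}^\top$ is the centering projection, and returns $\hat Y = \hat U_p \hat\Lambda_p^{1/2}$ assembled from the top $p$ nonnegative eigenpairs of $\hat B$. When the dissimilarities are exact, the same construction applied to $\Delta = (\|x_i - x_j\|^2)$ yields $B = X_c X_c^\top$, where $X_c$ is the latent configuration recentered at its centroid; $B$ is positive semidefinite of rank exactly $p$ (rank $p$ precisely because $\omega > 0$ forces $X_c$ to span $\bbR^p$), and its top-$p$ factor reproduces $X_c$ up to a rotation. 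Writing $\hat B = B + E$, the whole problem becomes: control the top-$p$ eigenstructure of $B + E$.

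First I would bound the perturbation in Frobenius norm. Since $E = -\tfrac12 J N J$ with $N = (\eps_{ij})$ the symmetric hollow noise matrix and $J$ an orthogonal projection, $\|E\|_F \le \tfrac12\|N\|_F = \tfrac{1}{\sqrt2}\eta^2$; in particular $E$ is second order in the noise scale, as it must be since $\hat B$ is quadratic in the distances. Second, and this is the crux of the geometry, I would relate the width to the spectral gap of $B$. The smallest nonzero eigenvalue satisfies $\lambda_p(B) = \sigma_p(X_c)^2 = \min_{\|u\|=1}\sum_i\langle u, x_i - \bar x\rangle^2$, and taking the two extreme points of the configuration in the minimizing direction bounds this below by a constant multiple of $\omega^2$ (up to the diameter/width normalization of \cite{arias2020perturbation}, which differs from ours as flagged in the footnote). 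Because $\lambda_{p+1}(B) = 0$, this produces an eigengap $\gtrsim \omega^2$ separating the signal subspace from its orthogonal complement, which is exactly what prevents the embedding from degenerating.

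With these two ingredients, the matrix-perturbation machinery is routine. The smallness hypothesis $\eta \le \sqrt m(\omega/C_1)$ guarantees that $\|E\|_F$ is a small fraction of the gap, so by Weyl's inequality the top $p$ eigenvalues of $\hat B$ stay positive and separated from the rest — meaning classical scaling's truncation of negative eigenvalues is harmless — and the Davis--Kahan $\sin\Theta$ theorem bounds the rotation between the top-$p$ eigenspaces of $\hat B$ and $B$ by $\lesssim \|E\|_F/\lambda_p \lesssim \eta^2/\omega^2$. Eigenvalue stability then follows from $|\hat\lambda_k^{1/2} - \lambda_k^{1/2}| \le \|E\|_F/\sqrt{\lambda_p} \lesssim \eta^2/\omega$.

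Finally I would assemble the configuration bound by splitting $\min_R\|\hat Y - X_c R\|_F$ into a contribution from the eigenvalue (square-root) error and one from the subspace rotation, the latter amplified by the scale of the configuration $\sqrt{\lambda_1(B)} = \|X_c\|_2$, which the diameter controls through $\sum_i\|x_i - \bar x\|^2 \le \tfrac12 m\rho^2$. Squaring the resulting sum and accounting for the diameter/width conventions of \cite{arias2020perturbation} produces a bound of the advertised form $\rho^2\eta^4/(m\omega^4)$. I expect the main obstacle to be precisely this last, square-root/Procrustes step: passing from a bound on the second-order object $\|\hat B - B\|_F$ together with the eigenspace rotation to a bound on the first-order configuration difference $\|\hat Y - X_c R\|_F$ under the \emph{correctly aligned} rigid transformation $R$, while tracking how the scale $\rho$ and the gap $\omega$ combine. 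The validity of this linearization — in particular that the aligned factors $\hat Y$ and $X_c R$ differ by a genuinely first-order amount rather than merely their Gram matrices being close — is exactly what the hypothesis $\eta \le \sqrt m(\omega/C_1)$ is there to secure.
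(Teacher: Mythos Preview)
The paper does not prove this lemma. It is quoted verbatim as Corollary~2 of \cite{arias2020perturbation} and used as a black box in the proof of Theorem~\ref{thm:sequential lateration}; there is no argument in the present paper to compare your proposal against.

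That said, your sketch is the standard route and is almost certainly what \cite{arias2020perturbation} does: double-center to pass from squared distances to the Gram matrix, bound the Frobenius perturbation $\|E\|_F \lesssim \eta^2$, identify the eigengap of $B=X_cX_c^\top$ with (a normalization of) $\omega^2$, invoke Weyl and Davis--Kahan, and then lift the subspace/eigenvalue bounds to a Procrustes bound on the factor. Two places deserve more care if you intend this as a self-contained proof rather than a plan. First, the scaling: with the width as defined \emph{here} (hyperplane gap), the crude lower bound $\lambda_p(B)\ge \omega^2/4$ does not carry the factor of $m$ that the hypothesis $\eta\le\sqrt m\,\omega/C_1$ and the conclusion $\rho^2\eta^4/(m\omega^4)$ require; the footnote in the paper flags exactly this mismatch, and you will need to track the $m$-dependence of the normalized width/diameter used in \cite{arias2020perturbation} rather than wave at it. Second, the ``square-root/Procrustes'' step you correctly single out as the obstacle is where most of the work lives: turning $\|\hat B-B\|_F$ plus a $\sin\Theta$ bound into $\min_R\|\hat Y-X_cR\|_F$ with the claimed constants needs an explicit argument (e.g., a matrix-square-root perturbation inequality or a direct Procrustes computation), not just the heuristic decomposition you outline.
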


\renewcommand{\tilde}{\widetilde}

Next is a stability result for classical lateration --- the lateration method that we consider --- where stability is considered with respect to noise both at the level of the dissimilarities and at the level of the landmarks. 
In the statement, $x_1, \dots, x_m$ play the role of landmarks and $x$ is the unknown point to be recovered; $y_1, \dots, y_m$ should be seen as noisy versions of $x_1, \dots, x_m$, and $d_1, \dots, d_m$ should be seen as noisy versions of $\|x-x_1\|, \dots, \|x-x_m\|$.

\begin{lemma}[Corollary 3 in \cite{arias2020perturbation}]
\label{lem:lateration}
Consider a configuration $x_1, \dots, x_m \in \bbR^p$  with diameter $\rho$ and width $\omega > 0$, and an arbitrary point $x \in \bbR^p$. 
Let $y_1, \dots, y_m \in \bbR^p$ be another configuration and let $d_1, \dots, d_m$ be set of dissimilarities, and define 
\begin{align}
\nu^2 = \sum_{i = 1}^m \|y_i - x_i\|^2, \quad\text{and}\quad
\zeta^4 = \sum_{i = 1}^m \big(d_{i}^2 - \|x - x_i\|^2\big)^2.
\end{align}
There is a constant $C_2 \ge 1$ such that, if $\nu \le \omega/C_2$, classical lateration with inputs $y_1, \dots, y_m$ and $d_1, \dots, d_m$ outputs an embedding $y$ satisfying 
\begin{align}
{\|y-x\|^2 \le \frac{C_2}{\omega^2} \bigg(\rho^2 \nu^2 + \frac{\zeta^4}{m}\bigg).}
\end{align}
\end{lemma}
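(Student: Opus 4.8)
The plan is to run the procedure along a single fixed laterative ordering, control the seed embedding with \lemref{CS}, control each subsequent one-point embedding with \lemref{lateration}, and then solve the resulting linear recursion for the per-node squared errors. First I would fix a laterative ordering and relabel so that $1, \dots, p+1$ is the seed clique and every $j > p+1$ is adjacent to a set $S_j \subseteq \{1, \dots, j-1\}$ with $|S_j| \ge p+1$. Because the latent configuration is in general position, every $(p+1)$-subset is affinely spanning, so each landmark set $\{x_i : i \in S_j\}$ is full-dimensional with strictly positive width; as there are only finitely many such subsets, their widths admit a common lower bound $\omega_* > 0$. Let $\rho$ be the diameter of the whole configuration, which upper bounds every sub-configuration diameter. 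These two quantities supply the uniform geometric constants that feed the two lemmas.

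Next I would align once and for all at the seed. Applying \lemref{CS} to the clique yields a rigid map $g_0$ with $\sum_{i \le p+1} \|y_i - g_0(x_i)\|^2$ bounded by a geometric constant times $\eta_0^4 = \sum_{1 \le i < j \le p+1} \eps_{ij}^2 \le \Sigma$, where $\Sigma := \sum_{(i,j)\in\cE}\eps_{ij}^2$. Replacing $x_i$ by $g_0(x_i)$ throughout (permissible, since this is rigid and preserves both distances and general position), I set $e_i := \|y_i - x_i\|^2$ and obtain $e_i \le B_0 \Sigma$ for $i \le p+1$, with $B_0 = C_1\rho^2/((p+1)\omega_*^4)$. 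For $j > p+1$, classical lateration places $y_j$ from the already-embedded landmarks $\{y_i : i \in S_j\}$ and the dissimilarities $\{d_{ij}\}$; here the landmark noise is $\nu_j^2 = \sum_{i \in S_j} e_i$ and the dissimilarity noise is $\zeta_j^4 = \sum_{i \in S_j} \eps_{ij}^2 \le \Sigma$. Lower bounding each landmark width by $\omega_*$ and each landmark diameter by $\rho$, \lemref{lateration} then gives the linear recursion
\begin{align}
e_j \le \kappa \sum_{i \in S_j} e_i + \lambda \sum_{i \in S_j} \eps_{ij}^2, \qquad \kappa := \frac{C_2 \rho^2}{\omega_*^2}, \quad \lambda := \frac{C_2}{(p+1)\omega_*^2}.
\end{align}

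I would then solve this recursion by induction along the ordering. Defining purely geometric constants $B_j := \kappa \sum_{i \in S_j} B_i + \lambda$ for $j > p+1$ (and $B_j := B_0$ for $j \le p+1$), the recursion together with $\sum_{i \in S_j}\eps_{ij}^2 \le \Sigma$ gives $e_j \le B_j \Sigma$ for every $j$; summing produces \eqref{lateration1} with $A := \sum_{j=1}^n B_j$. Crucially, the $B_j$ depend only on $\rho,\omega_*,p,C_1,C_2$ and the fixed predecessor structure, not on $\Sigma$, so there is no circularity: I set $\sigma^2$ to be the smallest threshold making all the lemma hypotheses hold, namely $\eta_0 \le \sqrt{p+1}\,\omega_*/C_1$ for the clique and $\nu_j \le \omega_*/C_2$ at each step, and both reduce to a bound of the form $\Sigma \le \mathrm{const}(\rho,\omega_*,p,A)$ because the inductive hypothesis already forces $\nu_j^2 \le A\Sigma$. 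Both $A$ and $\sigma$ are continuous in the latent distances, since $\rho$ and $\omega_* > 0$ are, and the $B_j$ are assembled from them by sums and products.

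The main obstacle is the error propagation: since $\kappa$ may exceed $1$, the constants $B_j$ can grow along the ordering, so the recursion could in principle amplify the noise multiplicatively. The point is that for a fixed graph the laterative ordering is a finite acyclic structure, so the $B_j$ terminate at finite values and $A$ is finite (though possibly large as a function of $n$), which is all the qualitative stability statement requires. Finally, to cover the `best' and any intermediate variant in one stroke, I would note that the argument applies verbatim to every valid seed clique and laterative ordering, of which there are only finitely many; taking $A$ and $\sigma$ to be the worst values over all of them, every full embedding the procedure can output satisfies \eqref{lateration1}, so whichever embedding a given variant selects satisfies it too.
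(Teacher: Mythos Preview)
Your proposal does not prove \lemref{lateration}; it proves \thmref{sequential lateration}. You invoke \lemref{lateration} (together with \lemref{CS}) as a black-box tool to control each step of the sequential procedure, which is precisely how the paper \emph{uses} these two lemmas in its proof of \thmref{sequential lateration}. The statement you were given, however, is \lemref{lateration} itself --- the single-step perturbation bound for classical lateration --- which the paper does not prove at all but simply quotes as Corollary~3 from the cited reference. As a proof of \lemref{lateration}, your argument is circular.

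Read instead as a proof of \thmref{sequential lateration}, your argument is correct and follows essentially the same route as the paper's: fix a laterative ordering, embed the seed clique via \lemref{CS}, align once by a rigid motion, and then propagate a linear error recursion through successive applications of \lemref{lateration}. The differences are bookkeeping. You track per-node squared errors $e_j$ and define constants $B_j$ recursively through the predecessor sets $S_j$, whereas the paper tracks the cumulative $\nu_k^2 = \sum_{i\le k} e_i$ and extracts the explicit growth $A_k \le (1+C_2\alpha)^{k-p-1}\cdot\mathrm{const}$; both yield finite constants continuous in the latent distances. Your final paragraph, taking the worst $A$ and $\sigma$ over all finitely many valid seed cliques and laterative orderings so that every variant of the algorithm is covered by one pair of constants, makes explicit a point the paper asserts but leaves unargued in its proof.
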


\begin{proof}[Proof of \thmref{sequential lateration}]

Assume without loss of generality that $(1, \dots, n)$ is already a laterative ordering. Therefore, for $k \ge p+1$, there is $I_{k} \subset [k]$ such that {$|I_{k}| \ge p+1$} and $(i,k+1) \in \cE$ for all $i \in I_{k}$. Note that $I_{p+1} = [p+1]$.
Let $\rho_k$ and $\omega_k$ denote the diameter and width of $\{x_i: i \in I_k\}$, 
define
\[\alpha := \max_k (\rho_k/\omega_k)^2, \qquad \underline{\omega} := \min_k \omega_k,\]
and set
\begin{align}
\nu_{k}^2 
:= \sum_{i \in [k]} \|y_i - x_i\|^2, \qquad 
\zeta_{k}^4 
:= \sum_{i \in I_k} \big(d_{k+1, i}^2 - \|x_{k+1} - x_{i}\|^2\big)^2
= \sum_{i \in I_k} \eps_{k+1, i}^2. 
\end{align}
Define $\cE_k := \big\{(i,j) \in \cE: i, j \in [k]\big\} = \cE \cap [k]^2$,
and set 
\[\eta_k^4 := \sum_{(i,j) \in \cE_{k}} \eps_{ij}^2, \qquad \eta^4 := \sum_{(i,j) \in \cE} \eps_{ij}^2.\]

{
Notice that $\cE_{k+1} = \cE_{k} \cup \{(i, k+1) : i \in I_k\}$ and $\cE_{k} \subseteq \cE$ for all $k \in [n]$; therefore, we have the following bounds which, although obvious, will be used multiple times
$$
\eta^4_{k+1} = \eta^4_k + \zeta_k^4, \qquad \eta_k \le \eta, \qquad \zeta_k \le \eta.
$$
}
We first apply classical scaling to $(d_{ij}: i,j \in [p+1])$, which we can do since these dissimilarities are available because of the assumption that $(1, \dots, n)$ is a laterative ordering. Let $y_1, \dots, y_{p+1}$ be the output.

Assuming $\eta$ is small enough that $\eta^4 \le B_{p+1} := {(p+1)^2}(\omega_{p+1}/C_1)^4$, we have $\eta_{p+1} \le {\sqrt{p+1}}(\omega_{p+1}/C_1)$, so that we may apply \lemref{CS} to get
\begin{align}
\sum_{i \in [p+1]} \|y_i - g(x_i)\|^2 \le C_1 \, \frac{\rho_{p+1}^2}{(p+1)\omega_{p+1}^4} \cdot \eta_{p+1}^4,
\end{align}
for some rigid transformation $g$.
Henceforth, we assume that $g$ is the identity transformation, which we can do without loss of generality, so that
\begin{align}
\label{sequential lateration_proof1}
\nu_{p+1}^2 \le A_{p+1} \eta_{p+1}^4, \quad \text{where } A_{p+1} := \frac{C_1\alpha}{(p+1)\underline{\omega}^2}.
\end{align}

With $y_1, \dots, y_{p+1}$ computed by classical scaling, we now compute $y_{p+2}$ by classical lateration. 
If $\eta$ is small enough that {\smash{$\eta^4 \le \min\{B_{p+1}, B_{p+2}\}$ where $B_{p+2} := A_{p+1}^{-1} (\omega_{p+1}/C_2)^{2}$}}, from \eqref{sequential lateration_proof1} we have $\nu_{p+1} \le \omega_{p+1}/C_2$, so that we may apply \lemref{lateration} to get
{
\begin{align}
\|y_{p+2} - x_{p+2}\|^2 
&\le \frac{C_2}{\omega_{p+1}^2} \bigg(\rho_{p+1}^2 \nu_{p+1}^2 + \frac{\zeta_{p+1}^4}{|I_{p+1}|}\bigg) \\
&\le {C_2} \bigg(\alpha A_{p+1}\eta_{p+1}^2 + \frac{\zeta_{p+1}^4}{(p+1)\underline{\omega}^2}\bigg)
\end{align}
where the last inequality follows by using \eqref{sequential lateration_proof1} along with the bounds $(\rho_{p+1}^2/\omega_{p+1}^2) \le \alpha$, $\omega_{p+1} \ge \underline{\omega}$ and $|I_{k}| \ge p+1$. This implies that
\begin{align}
\nu_{p+2}^2
&= \nu_{p+1}^2 + \|y_{p+2} - x_{p+2}\|^2 \\
&\le A_{p+1} \eta_{p+1}^4 + {C_2} \bigg(\alpha A_{p+1}\eta_{p+1}^2 + \frac{\zeta_{p+1}^4}{(p+1)\underline{\omega}^2}\bigg) \\
&\le (1+C_2\alpha)A_{p+1} \eta_{p+1}^4 + \frac{C_2}{(p+1)\underline{\omega}^2}\zeta_{p+1}^4 \\
&\le A_{p+2} \: (\eta_{p+1}^4 + \zeta_{p+1}^4)\\
&= A_{p+2} \: \eta_{p+2}^4,
\end{align}
where, from \eqref{sequential lateration_proof1},
\begin{align}\label{eq:Ap2}
A_{p+2} 
\le \max\bigg\{(1+C_2\alpha)A_{p+1},  \frac{C_2}{(p+1)\underline{\omega}^2}\bigg\} 
= \frac{(1+C_2\alpha)}{(p+1)\underline{\omega}^2}\max\bigg\{C_1\alpha,  \frac{C_2}{(1+C_2\alpha)}\bigg\}. 
\end{align}
}

This can be carried on for all $k$. To formalize this, we use induction. Suppose that for some $k \ge p+2$, $y_1, \dots, y_k$ have been computed which, when $\eta^4 \le \min\{B_{p+1}, \dots, B_k\}$, satisfies
\begin{align}
\label{sequential lateration_proof2}
\sum_{i \in [k]} \|y_i - x_i\|^2 \le A_k {\eta_k^4},
\end{align}
where 
\begin{align}
\label{sequential lateration_proof3}
{
    A_k
    \le \frac{(1+C_2 \alpha)^{k-p-1}}{(p+1)\underline{\omega}^2}\max\bigg\{C_1\alpha,  \frac{C_2}{(1+C_2\alpha)}\bigg\},
}
\end{align}
and $B_j := A_{j-1}^{-1} (\omega_{j-1}/C_2)^2$ for $p+2 \le j \le k$. ($A_{p+1}$ and $B_{p+1}$ are defined above.)
As we showed above, this is the case when $k = p+2$. We assume this all holds at $k$ and to continue the induction we obtain $y_{k+1}$ by lateration based on $\{y_i : i \in I_k\}$ and the corresponding dissimilarities $\{d_{i, k+1} : i  \in I_k\}$. 
If $\eta$ is small enough that \smash{$\eta^4 \le B_{k+1} := A_k^{-1} (\omega_k/C_2)^{2}$}, in addition to $\eta^4 \le \min\{B_{p+1}, \dots, B_k\}$, from \eqref{sequential lateration_proof2} we have $\nu_{k} \le \omega_{k}/C_2$, so that we may apply \lemref{lateration} to get
{
\begin{align}
\|y_{k+1} - x_{k+1}\|^2 
&\le \frac{C_2}{\omega_{k}^2} \bigg(\rho_{k}^2 \nu_{k}^2 + \frac{\zeta_{k}^4}{|I_{k}|}\bigg) \\
&\le {C_2} \bigg(\alpha A_{k}\eta_{k}^2 + \frac{\zeta_{k}^4}{(p+1)\underline{\omega}^2}\bigg)
\end{align}
implying that 
\begin{align}
\sum_{i \in [k+1]} \|y_i - x_i\|^2
&= \sum_{i \in [k]} \|y_i - x_i\|^2 + \|y_{k+1} - x_{k+1}\|^2 \\
&\le A_{k} \eta_{k}^4 + {C_2} \bigg(\alpha A_{k}\eta_{k}^2 + \frac{\zeta_{k}^4}{(p+1)\underline{\omega}^2}\bigg) \\
&\le (1+C_2\alpha)A_{k} \eta_{k}^4 + \frac{C_2}{(p+1)\underline{\omega}^2}\zeta_{k}^4 \\
&\le A_{k+1} (\eta_{k}^4 + \zeta_{k}^4)\\
&= A_{k+1} \eta_{k+1}^4,
\end{align}
where, using the induction hypothesis \eqref{sequential lateration_proof3},
\begin{align}
A_{k+1} 
&\le \max\bigg\{(1+C_2\alpha)A_{k},  \frac{C_2}{(p+1)\underline{\omega}^2}\bigg\}\\
&= \frac{1}{(p+1)\underline{\omega}^2}\max\bigg\{(1+C_2\alpha) \cdot (1+C_2\alpha)^{k-p-1} \max\Big\{ C_1\alpha, \frac{C_2}{1+C_2\alpha} \Big\},  C_2\bigg\}.\label{sequential lateration_proof_Ak}
\end{align}
By noting that 
$$
(1+C_2\alpha) \cdot \max\Big\{C_1\alpha, \frac{C_2}{1+C_2\alpha}\Big\} = \max\big\{(1+C_2\alpha) C_1\alpha,\; C_2\big\} \ge C_2,
$$
the maximum in \eqref{sequential lateration_proof_Ak} is attained by first argument, and it follows that
\begin{align}
    A_{k+1} \le \frac{(1+C_2 \alpha)^{k-p}}{(p+1)\underline{\omega}^2}\max\bigg\{C_1\alpha,  \frac{C_2}{(1+C_2\alpha)}\bigg\}
\end{align}
We are thus able to proceed with the induction.  At the end of the induction, when all the $y_i$ have been embedded, we obtain the following bound on the accuracy
\begin{align}
\sum_{i \in [n]} \|y_i - x_i\|^2 \le A_n \eta_n^4 = A_n \eta^4,
\end{align}
with
\begin{align}
A_n 
\le \frac{(1+C_2 \alpha)^{n-p-1}}{(p+1)\underline{\omega}^2}\max\bigg\{C_1\alpha,  \frac{C_2}{(1+C_2\alpha)}\bigg\}, \label{A_bound}
\end{align}
whenever $\eta^4 \le B_k$ for $p+1 \le k \le n$, which simplifies to 
\begin{align}
\eta^4 
&\le \min\{B_{p+1}, B_n\} \\
&\le \min\bigg\{(p+1)^2\Big(\frac{\underline\omega}{C_1}\Big)^4, A_{n-1}^{-1} \Big(\frac{\underline\omega}{C_2}\Big)^2\bigg\} \\
&\le \min\Bigg\{\frac{(p+1)^2}{C_1^{4}}, \frac{p+1}{C_2^2(1+C_2\alpha)^{n-p-2} \max\{C_1\alpha,  {C_2}/{(1+C_2\alpha)}\}}\Bigg\} \: \underline\omega^4.\label{sigma_bound}
\end{align}
}
Since the diameter and width of a point set are continuous functions of its pairwise distances, $\alpha$ and $\underline\omega$ are continuous functions of $(\|x_i-x_j\| : (i,j) \in \cE)$,  and this allows us to conclude.
\end{proof}

The bound in \thmref{sequential lateration} takes into account the worst case scenario where, for every $k > p+1$, each node $v_k$ is connected to precisely the preceding $p+1$ nodes $\{v_{k-1}, \dots, v_{k-{p-1}}\}$. In this case, embedding $v_{k+1}$ can only be done by applying classical lateration after the embedding of $v_k$ has been accomplished. This is the most restrictive scenario, which necessitates performing $n-p-1$ rounds of classical lateration to embed all the nodes, and is reflected in the multiplicative growth in the constant $A_n$ in \eqref{A_bound}. The accumulation of error can be avoided in some situations by minimizing the number of rounds of classical lateration, by whenever possible embedding multiple nodes based on the same set of landmark (i.e., already embedded) nodes. The best case scenario corresponds to the landmark MDS setting of \cite{de2004sparse}, where, for every $k > p+1$, each node $v_k$ is connected to the same fixed (and well-conditioned) set $\{v_1, \dots, v_{p+1}\}$.

\section{Perturbation bound for stress minimization}
\label{sec:stress minimization}

In the noisy realizable setting \eqref{setting}, the stress clearly functions as a proxy for the {\em noiseless stress}, defined as
\begin{align}
\label{noiseless}
\sum_{(i,j) \in \cE} \big(\|y_i-y_j\|^2 - \|x_i-x_j\|^2\big)^2.
\end{align}
In turn, the noiseless stress functions as a proxy for the {\em complete noiseless stress}, defined as 
\begin{align}
\label{complete}
\sum_{1 \le i < j \le n} \big(\|y_i-y_j\|^2 - \|x_i-x_j\|^2\big)^2.
\end{align}
We establish below that, in some actionable sense, the stress tracks the complete noiseless stress in the context of a lateration graph. 

\subsection{Rigidity theory}
To investigate this, we turn to {\em rigidity theory}, which examines the question of uniqueness (up to a rigid transformation) when realizing a weighted graph in a given Euclidean space~\cite{thorpe1999rigidity, asimow1978rigidity, laman1970graphs}. 
We introduce some vocabulary from that literature (in particular, from~\cite{connelly2005generic}). As we have already seen, a configuration is a set of $n$ points in $\bbR^p$ indexed by $[n] = \{1, \dots, n\}$. 
A configuration is {\em generic} if the set of its coordinates do not satisfy any nonzero polynomial equation with integer coefficients. 
We say that two configurations $\by = \{y_1, \dots, y_n\}$ and $\bz = \{z_1, \dots, z_n\}$ are {\em congruent} if there is a rigid transformation $f : \bbR^p \to \bbR^p$ such that $z_i = f(y_i)$ for all $i \in [n]$.
A configuration $\by = \{y_1, \dots, y_n\}$ and a graph $\cG = (\cV = [n], \cE)$, together, form a {\em framework}, denoted $\cG(\by)$.  We say that two frameworks, $\cG(\by)$ and $\cG(\bz)$ are {\em equivalent} if 
\begin{align}
\|y_i - y_j\| = \|z_i - z_j\|, \quad \forall (i,j) \in \cE.
\end{align}
The framework $\cG(\by)$ is said to be {\em globally rigid} if, whenever $\cG(\by)$ and $\cG(\bz)$ are equivalent, then necessarily $\by$ and $\bz$ are congruent. 
The graph $\cG$ is said to be {\em generically globally rigid} if $\cG(\by)$ is globally rigid whenever $\by$ is generic.

The complete noiseless stress \eqref{complete} is exactly zero when $\|y_i-y_j\| = \|x_i-x_j\|$ for all $i < j$, and we know this to be equivalent to $\by = \{y_1, \dots, y_n\}$ and $\bx = \{x_1, \dots, x_n\}$ being congruent. For the noiseless stress \eqref{noiseless}, the same is true if $\cG(\bx)$ is globally rigid. This is by mere definition, and we would like to know when this happens. Also by definition, it happens when $\bx$ is a generic configuration and $\cG$ is {\em generically globally rigid}. 

Generic configurations are `common' in the sense that those configurations that are not generic have zero Lebesgue measure (in $\bbR^{np}$). This is simply because there are countably many polynomials with integer coefficients and each one of these defines a surface (its null set) of zero Lebesgue measure. In particular, if a configuration is drawn iid at random from a density, then the configuration is generic with probability one. {Generic configurations have width $\omega > 0$ and ensure that the bounds in \thmref{sequential lateration}, \lemref{CS}, and \lemref{lateration} are not vacuous.}

The question of whether a graph is generically globally rigid or not, is a delicate question. In the very special but useful case of dimension $p = 2$, Jackson~\&~Jord\'{a}n~\cite{jackson2005connected} have shown that if the graph is 6-vertex connected, meaning that it remains connected even after the removal of any 5 vertices, then the graph is generically globally rigid. 
The situation in dimension $p \ge 3$ is more complex, although some useful results exist; see, e.g.,  \cite{hendrickson1992conditions, anderson2009graphical}. A necessary and sufficient condition exists in terms of the existence of an {\em equilibrium stress matrix}, which for a framework $\cG(\bx)$ is defined as a matrix $\omega = (\omega_{ij})$ satisfying $\sum_{j : (i,j) \in \cE} \omega_{ij} (x_i - x_j) = 0$ for all $i \in [n]$.
To a stress matrix $\omega$, we associate another matrix $\Omega = (\Omega_{ij})$ with $\Omega_{ij} = - \omega_{ij}$ when $i \ne j$, and $\Omega_{ii} = \sum_{j} \omega_{ij}$. 
(If we see $\omega$ as the weight matrix of a graph, then $\Omega$ is the corresponding Laplacian.)
Connelly~\cite{connelly2005generic} and Gortler~~et~al.~\cite{gortler2010characterizing}, together, have shown that if $\cG$ has $n \ge p+2$ nodes and is not the complete graph, and if $\bx$ is a generic configuration, then $\cG(\bx)$ is globally rigid if and only if there is a equilibrium stress matrix $\omega$ with $\rank \Omega = n - p - 1$. Aspnes et al. show that lateration graphs (in dimension $p$) are generically globally rigid \cite[Theorem~8]{aspnes2006theory}.

\subsection{Rigidity theory in the presence of noise}
What we have learned so far is that, if the graph $\cG = (\cV, \cE)$ given in the embedding problem is generically globally rigid, and we are in a realizable situation with an underlying configuration $x_1, \dots, x_n$ that is generic, then the noiseless stress \eqref{noiseless} is minimized exactly where the complete noiseless stress \eqref{complete} is minimized, that is, at all the rigid transformations of the configuration. 
These conditions are fulfilled with high probability by a random geometric graph under additional mild conditions (\secref{rgg}).
But all this does not imply much about the noisy stress \eqref{stress}.

While most of the literature on rigidity theory focuses on the noiseless setting, Anderson~et~al.~\cite{anderson2010formal} consider the question of stability in the presence of noise. They do so in the realizable setting in dimension $p=2$, and in the setting where {\em anchors} are given. (Anchors are points whose position is known.) The graph is generically globally rigid with an underlying generic configuration. With anchors, the configuration is effectively unique, not just up to a rigid transformation. In this context, they show that the distance between the minimizer of the stress \eqref{stress} constrained by the anchors and the underlying configuration is bounded by a constant multiple of the noise amplitude. 
Their analysis in based on the results of Connelly~\cite{connelly2005generic} and Gortler~~et~al.~\cite{gortler2010characterizing} mentioned above.

We prove an analogous result in the present anchor-free setting for an arbitrary embedding dimension. We do so for lateration graphs, which in addition to including important models (\secref{rgg}), allows for a completely different proof based on the perturbation bound just established in \thmref{sequential lateration}. 

\begin{theorem}
\label{thm:stability}
In the context of \secref{setting}, consider a noisy realizable situation as in \eqref{setting} in which the network structure $(\cV, \cE)$ is a lateration graph and the latent configuration is in general position. Then, there are $\sigma > 0$ and $A>0$ such that, if $\sum_{(i,j) \in \cE} \eps_{ij}^2 \le \sigma^2$, any minimizer $y^*_1, \dots, y^*_n$ of the stress \eqref{stress} satisfies
\begin{align}
\label{stability1}
\min_{g} \sum_{i \in [n]} \|y^*_i - g(x_i)\|^2 \le A {\sum_{(i,j) \in \cE} \eps_{ij}^2},
\end{align}
where the minimization is over the rigid group of transformations.
\end{theorem}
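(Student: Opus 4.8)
The plan is to sandwich the minimizer between two applications of \thmref{sequential lateration}: one to upper-bound the minimal stress, and one to convert a small stress back into proximity to the latent configuration $\bx$. Write $\eta^4 := \sum_{(i,j)\in\cE}\eps_{ij}^2$, let $\by^{\mathrm{sl}}$ denote the output of sequential lateration on the given dissimilarities $(d_{ij})$, and let $\by^* = \{y^*_1,\dots,y^*_n\}$ be any stress minimizer. Fix the constants $A_0,\sigma_0$ supplied by \thmref{sequential lateration} for this graph and this latent configuration.

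First I would bound the minimal stress. Since the stress \eqref{stress} depends on a configuration only through its pairwise distances, it is invariant under rigid transformations, so by \thmref{sequential lateration} I may assume (after applying the optimal $g$) that $\sum_i\|y^{\mathrm{sl}}_i - x_i\|^2 \le A_0\eta^4$. Expanding $\|y^{\mathrm{sl}}_i-y^{\mathrm{sl}}_j\|^2 - d_{ij}^2 = (\|y^{\mathrm{sl}}_i-y^{\mathrm{sl}}_j\|^2-\|x_i-x_j\|^2) - \eps_{ij}$ and using that the squared-distance map is locally Lipschitz on the bounded region containing $\bx$ and $\by^{\mathrm{sl}}$ (valid once $\eta$ is small enough that each $\|y^{\mathrm{sl}}_i-x_i\|$ is small), each squared term is $O(\|y^{\mathrm{sl}}_i-x_i\|^2 + \|y^{\mathrm{sl}}_j-x_j\|^2 + \eps_{ij}^2)$, whence the stress at $\by^{\mathrm{sl}}$ is at most $C\eta^4$ for a constant $C$ depending on the geometry of $\bx$ and on $n$. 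By optimality and $(a-b)^2\le 2a^2+2b^2$, substituting $d_{ij}^2=\|x_i-x_j\|^2+\eps_{ij}$ then bounds the noiseless stress \eqref{noiseless} at the minimizer:
\[
\sum_{(i,j)\in\cE}\big(\|y^*_i-y^*_j\|^2-\|x_i-x_j\|^2\big)^2 \;\le\; 2\sum_{(i,j)\in\cE}\big(\|y^*_i-y^*_j\|^2-d_{ij}^2\big)^2 + 2\eta^4 \;\le\; (2C+2)\eta^4 =: C'\eta^4 .
\]

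The second, and crucial, step inverts this by reinterpreting the edge lengths of $\by^*$ as a fresh set of noisy dissimilarities of the same latent configuration. Set $\tilde d_{ij} := \|y^*_i-y^*_j\|$ for $(i,j)\in\cE$, so that $\tilde d_{ij}^2 = \|x_i-x_j\|^2 + \tilde\eps_{ij}$ with $\tilde\eps_{ij} := \|y^*_i-y^*_j\|^2 - \|x_i-x_j\|^2$; the previous display gives $\sum_{(i,j)\in\cE}\tilde\eps_{ij}^2 \le C'\eta^4$, which falls below the threshold of \thmref{sequential lateration} once $\sigma$ is chosen small enough. Applying \thmref{sequential lateration} to the input $(\tilde d_{ij})$ shows that sequential lateration returns a configuration lying within $A_0\sum_{(i,j)\in\cE}\tilde\eps_{ij}^2 \le A_0 C'\eta^4$ of $\bx$. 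The point is that $(\tilde d_{ij})$ is realized \emph{exactly} by $\by^*$, so running the very same procedure must reconstruct $\by^*$ itself up to a rigid transformation. Combining the two facts gives $\min_g \sum_i \|y^*_i - g(x_i)\|^2 \le A_0 C'\eta^4$, the desired bound with $A := A_0 C'$.

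The main obstacle is justifying the reconstruction claim that sequential lateration, fed the exactly realizable dissimilarities $(\tilde d_{ij})$, returns $\by^*$ up to congruence. This is the exact-recovery property of Aspnes~et~al.~\cite[Th.~10]{aspnes2006theory}, but it presumes the realizing configuration $\by^*$ is in general position, which is not given a priori and cannot be assumed without circularity. I would circumvent this by not invoking exact recovery as a black box, and instead re-running the inductive argument of \thmref{sequential lateration} with $\by^*$ itself playing the role of the reconstructed configuration. Along the laterative ordering, at the base clique classical scaling applied to the exactly realizable block $(\tilde d_{ij})_{i,j\le p+1}$ returns a configuration congruent to $\{y^*_i\}_{i\le p+1}$, while \lemref{CS} forces that same output to be within $O(\eta^4)$ of $\{x_i\}_{i\le p+1}$; fixing the single rigid alignment produced here and applying it globally to $\by^*$ handles all congruence bookkeeping at once. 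Inductively, at each lateration step the landmarks $\{y^*_i : i\in I_k\}$ are already known to be close to the general-position landmarks $\{x_i : i\in I_k\}$, hence span $\bbR^p$ for $\eta$ small (general position being an open condition), so classical lateration on the exact distances $\{\tilde d_{k+1,i}\}$ returns $y^*_{k+1}$ exactly, while \lemref{lateration} forces this output to be within the stated bound of $x_{k+1}$. Propagating the induction exactly as in the proof of \thmref{sequential lateration} accumulates these into $\sum_i\|y^*_i - x_i\|^2 \le A_0 C'\eta^4$, with the potential degeneracy of $\by^*$ resolved by the openness of the general-position condition rather than assumed away. The remaining bookkeeping—choosing one $\sigma$ small enough to validate every smallness condition across both steps—is routine.
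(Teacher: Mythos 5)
Your proposal is essentially correct, but it is not the paper's argument: the two proofs invert the problem in genuinely different ways. Both rest on a role-reversal second application of \thmref{sequential lateration}. The paper keeps the \emph{original} dissimilarities $(d_{ij})$ and swaps the latent configuration, viewing $(d_{ij})$ as noisy measurements of $\by^*$ with errors $\xi_{ij} := d_{ij}^2 - \|y^*_i - y^*_j\|^2$ (bounded, since $\bx$ is a feasible competitor); the sequential-lateration output $y$ computed from $(d_{ij})$ is then close to both $\bx$ and $\by^*$, and the triangle inequality finishes. You instead keep the \emph{original} latent configuration $\bx$ and swap the dissimilarities, feeding the exactly realizable $\widetilde d_{ij} := \|y^*_i - y^*_j\|$ back into the algorithm. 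Each choice has a price. The paper's route uses \thmref{sequential lateration} as a black box twice, but must then make $\by^*$ play the role of a latent configuration: it needs $\by^*$ in general position (handled by a small random perturbation) and needs control of $\sigma_0(\by^*)$ and $A_0(\by^*)$, which is precisely why \thmref{sequential lateration} asserts continuity of its constants in the pairwise distances. Your route keeps both applications anchored at $\bx$, so the constants never move and no perturbation or continuity argument is needed; in exchange, you must identify the output on $(\widetilde d_{ij})$ with $\by^*$ itself, and that identification relies on exact-recovery facts not contained in \lemref{CS} or \lemref{lateration}: classical scaling on a realizable complete distance matrix returns the realizing points up to congruence (true regardless of degeneracy, by the classical Young--Householder theory), and classical lateration on exact distances returns the realizing point \emph{provided} the landmarks affinely span $\bbR^p$. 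You correctly spot that the spanning of $\{y^*_i : i \in I_k\}$ cannot be assumed, and you recover it inductively from closeness to the general-position landmarks $\{x_i : i \in I_k\}$ together with openness of the spanning condition; this works, but it forces you to re-open and interleave with the induction proving \thmref{sequential lateration}, rather than citing it --- that is the real cost of your approach, and those exactness claims would need to be written out. Finally, your first step is needlessly lossy: to bound the minimal stress you pass through the sequential-lateration output and a local Lipschitz estimate, whereas $\bx$ itself is feasible with stress exactly $\sum_{(i,j)\in\cE}\eps_{ij}^2$, so optimality gives $\sum_{(i,j)\in\cE}\big(\|y^*_i-y^*_j\|^2 - d_{ij}^2\big)^2 \le \sum_{(i,j)\in\cE}\eps_{ij}^2$ in one line (this is what the paper does); adopting it would make your constant $C'$ absolute and shorten the argument.
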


Once again, and as is the case in \cite{anderson2010formal}, the constants $\sigma>0$ and $A$ depend on the graph and the latent configuration, namely, on the framework $\cG(\bx)$. 
(As it turns out, the proof below shows that we can use the same $\sigma$ and a small multiple of the constant $A$ of \thmref{sequential lateration}.)

\begin{proof}
We first bound the minimum value of the stress. 
Let $y_1, \dots, y_n$ be the embedding given by sequential lateration. 
Let $\sigma_0(\bx) > 0$ and $A_0(\bx) > 0$ be as in \thmref{sequential lateration}. If $\sigma$ is small enough that $\sigma \le \sigma_0(\bx)$, so that $\sum_{(i,j) \in \cE} \eps_{ij}^2 \le \sigma_0(\bx)^2$, the theorem gives the bound
\begin{align}
\label{stability2}
\sum_{i \in [n]} \|y_i - g_0(x_i)\|^2 \le A_0(\bx) \sum_{(i,j) \in \cE} \eps_{ij}^2, 
\le A_0(\bx) \sigma^2,
\end{align}
for some rigid transformation $g_0$. 
Let $y^*_1, \dots, y^*_n$ be a stress minimizer. Since $x_1, \dots, x_n$ is feasible, it must be the case that the stress achieved by $y^*_1, \dots, y^*_n$ is not larger than than the stress achieved by $x_1, \dots, x_n$, so that
\begin{align}
\label{stability3}
\sum_{(i,j) \in \cE} \big(\|y^*_i - y^*_j\|^2 - d_{ij}^2\big)^2
\le \sum_{(i,j) \in \cE} \big(\|x_i - x_j\|^2 - d_{ij}^2\big)^2
= \sum_{(i,j) \in \cE} \eps_{ij}^2.
\end{align}
Therefore, if we define $\xi_{ij} = d_{ij}^2 - \|y^*_i - y^*_j\|^2$, we have that
\begin{align}
\label{xi eps}
\sum_{(i,j) \in \cE} \xi_{ij}^2
\le \sum_{(i,j) \in \cE} \eps_{ij}^2
\le \sigma^2.
\end{align}

We assume, without loss of generality, that $y^*_1,\dots,y^*_n$ are in general position; if not, by randomly perturbing each $y^*_i$ by a small amount ${\gamma_i \ll (1/4|\cE|)\sum_{(i,j) \in \cE}(\eps_{ij}^2 - \xi_{ij}^2)}$, \eqref{xi eps} still holds for ${y^*_i \leftarrow y^*_i + \gamma_i}$. We are looking at applying \thmref{sequential lateration} with the same dissimilarities $(d_{ij})$ and same graph structure but the configuration $y^*_1, \dots, y^*_n$ instead of the configuration $x_1, \dots, x_n$. We may do that if $\sigma$ is small enough that $\sigma \le \sigma_0(\by^*)$, as in that case $\sum_{(i,j) \in \cE} \xi_{ij}^2 \le \sigma_0(\by^*)^2$.
By the fact that $\sigma_0$ and $A_0$ are continuous functions of the noiseless pairwise distances, and the fact that the pairwise distances are continuous functions of the configuration, as can be seen for example via
\begin{align}
    \big| \|y_i - y_j\| - \|x_i - x_j\| \big| 
    &= \big| \|y_i - y_j\| - \|g_0(x_i) - g_0(x_j)\| \big|\\
    &\le \|y_i-g_0(x_i)\| + \|y_j-g_0(x_j)\| \le C(A_0(\mathbf{x}), \sigma_0(\mathbf{x})),
\end{align}
by \eqref{stability2}, if $\sigma$ is small enough (in a way that depends on $\bx$ but not on $y^*$), $\sigma_0(\by^*) \ge \tfrac12 \sigma_0(\bx)$ and $A_0(\by^*) \le 2 A_0(\bx)$, and when this is the case, it suffices that $\sigma \le \tfrac12 \sigma_0(\bx)$ to be able to proceed, and obtain that 
\begin{align}
\label{stability4}
\sum_{i \in [n]} \|y_i - g_1(y^*_i)\|^2 \le A_0(\by^*) \sum_{(i,j) \in \cE} \xi_{ij}^2 \le 2 A_0(\bx) \sum_{(i,j) \in \cE} \eps_{ij}^2,
\end{align}
for some rigid transformation $g_1$. 

Therefore, assuming $\sigma$ is small enough, combining \eqref{stability2} and \eqref{stability4}, together with \eqref{xi eps}, and using the triangle inequality, yields
\begin{align}
\sum_{i \in [n]} \|g_1(y^*_i) - g_0(x_i)\|^2 
&\le 2 \sum_{i \in [n]} \|g_1(y^*_i) - y_i\|^2 + 2 \sum_{i \in [n]} \|y_i - g_0(x_i)\|^2 \\
&\le 4 A_0(\bx) \sum_{i \in [n]} \eps_{ij}^2 + 2 A_0(\bx) \sum_{(i,j) \in \cE} \eps_{ij}^2 \\
&\le 6 A_0(\bx) \sum_{(i,j) \in \cE} \eps_{ij}^2.
\end{align}
We conclude the proof by observing that $\|g_1(y^*_i) - g_0(x_i)\| = \|y^*_i - g_2(x_i)\|$ with $g_2 := g_1^{-1} \circ g_0$ being a rigid transformation.
\end{proof}

While the constant was obtained in a worst-case scenario and is unlikely to be tight in more friendly (and less contrived) settings, it is natural to ask whether the perturbation bound in \thmref{stability} has the right dependency in the errors $\eps_{ij}$.  To this end, the following result answers that question in the affirmative: Any stress minimizer must incur an error that is at least a constant multiple of the noise amplitude.

\begin{proposition}
    Consider the same setting as in \thmref{stability}. Then, for every lateration graph  with network structure $(\cV, \cE)$ and $x_1, \dots, x_n$ in general position, there exists $a > 0$ and dissimilarities $\{d_{ij}: (i, j) \in \cE\}$ in the noisy realizable situation as in \eqref{setting} satisfying $\sum_{(i,j) \in \cE} \eps_{ij}^2 \le \sigma^2$, such that any minimizer $y_1^*, \dots, y_n^*$ of the stress \eqref{stress} satisfies
\begin{align}
\label{lower1}
\min_{g} \sum_{i \in [n]} \|y^*_i - g(x_i)\|^2 \ge a {\sum_{(i,j) \in \cE} \eps_{ij}^2},
\end{align}
where the minimization is over the rigid group of transformations and $a > 0$ is a constant depending only on the graph $(\cV, \cE)$.
\end{proposition}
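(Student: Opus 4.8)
The plan is to prove a matching lower bound by constructing an explicit noise pattern from an infinitesimal, \emph{non-rigid} perturbation of the latent configuration and then carrying out a first-order analysis. Fix a direction $v = (v_1, \dots, v_n) \in (\bbR^p)^n$ that is not an infinitesimal rigid motion of $\bx$ (for instance, displace a single node, $v = (v_1, 0, \dots, 0)$ with $v_1 \ne 0$ and $n \ge 2$), and for small $t > 0$ set $x_i' := x_i + t v_i$. Choose the dissimilarities to be \emph{exactly} realized by the perturbed configuration, namely $d_{ij} := \|x_i' - x_j'\|$ for $(i,j) \in \cE$, so that $\eps_{ij} = \|x_i' - x_j'\|^2 - \|x_i - x_j\|^2$. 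Since general position is an open condition, $\bx'$ remains in general position for all sufficiently small $t$; and since a lateration graph with a configuration in general position admits a \emph{unique} reconstruction by sequential lateration (each step intersects $p+1$ spheres with affinely independent centers, yielding a single point), the framework $\cG(\bx')$ is globally rigid, cf.\ \cite[Theorem 10]{aspnes2006theory}. Therefore any stress minimizer $\by^*$ attains stress zero, realizes all edge lengths, and is hence congruent to $\bx'$. As rigid motions are isometries, the left-hand side of \eqref{lower1} then equals $\min_g \sum_{i} \|x_i' - g(x_i)\|^2$, reducing the problem to comparing $\bx'$ with $\bx$.

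Next I would expand both sides to second order in $t$. From $\eps_{ij} = 2 t \langle x_i - x_j, v_i - v_j\rangle + t^2 \|v_i - v_j\|^2$, the noise amplitude is
\begin{align}
\sum_{(i,j) \in \cE} \eps_{ij}^2 = 4 t^2 \sum_{(i,j) \in \cE} \langle x_i - x_j, v_i - v_j\rangle^2 + o(t^2).
\end{align}
On the other side, the rigid-motion orbit of $\bx$ is a smooth manifold whose tangent space at $\bx$ is the space $\mathrm{Rig}(\bx)$ of infinitesimal rigid motions, so the squared distance from $\bx + t v$ to this orbit satisfies
\begin{align}
\min_g \sum_{i} \|x_i + t v_i - g(x_i)\|^2 = t^2 \, \mathrm{dist}(v, \mathrm{Rig}(\bx))^2 + o(t^2).
\end{align}
Because $v \notin \mathrm{Rig}(\bx)$, the coefficient $\mathrm{dist}(v, \mathrm{Rig}(\bx))^2$ is strictly positive, so the ratio of the left-hand side of \eqref{lower1} to the noise converges, as $t \to 0$, to the strictly positive (possibly infinite) limit $\mathrm{dist}(v, \mathrm{Rig}(\bx))^2 \big/ \big(4 \sum_{(i,j)} \langle x_i - x_j, v_i - v_j\rangle^2\big)$. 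Setting $a$ to be, say, half this limit and then taking $t$ small enough that the ratio exceeds $a$ and $\sum_{(i,j)} \eps_{ij}^2 \le \sigma^2$ establishes \eqref{lower1}.

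I expect the main obstacle to be the second displayed expansion: justifying rigorously that the squared distance to the \emph{nonlinear} orbit of rigid transformations is, to leading order, the squared Euclidean distance to its tangent space $\mathrm{Rig}(\bx)$. This needs the standard observation that, since $\bx$ affinely spans $\bbR^p$ (guaranteed by general position), the orbit map is an immersion near the identity and the orbit is locally a graph over $\mathrm{Rig}(\bx)$, so the second-order Taylor remainder is $o(t^2)$. A secondary point to verify is that general position (rather than full genericity) suffices for global rigidity of $\cG(\bx')$, which is exactly the unique-reconstruction property of sequential lateration invoked above. Finally, I would record that the constant $a$ is the limiting ratio just displayed; it is determined by the framework $\cG(\bx)$ together with the chosen direction $v$ and, crucially, is independent of the noise scale $t$, which is the sense in which it is a fixed constant attached to the problem rather than to the particular perturbation.
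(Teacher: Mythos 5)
Your proof is correct in its essentials, but it takes a genuinely different route from the paper's. The paper's argument is a closed-form construction with no asymptotics: it sets $d_{ij}^2 = \eta^2\|x_i-x_j\|^2$ (a global dilation of the latent distances), observes that $y_i^* = \eta x_i$ attains zero stress, and compares $\min_g \sum_i \|y_i^*-g(x_i)\|^2 = (1-\eta)^2\sum_i\|x_i\|^2$ directly against $\sum_{(i,j)\in\cE}\eps_{ij}^2$ using only elementary inequalities involving the maximum degree. Your construction strictly generalizes this: the dilation is exactly your perturbation in the direction $v=-\bx$ (which is never an infinitesimal rigid motion), except that the paper evaluates both sides exactly rather than to leading order in $t$, and thereby sidesteps the expansion of the squared distance to the rigid orbit that you correctly identify as your main technical burden. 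In exchange for that extra analysis, your route buys two things: it works for an arbitrary non-rigid direction $v$, and it treats the ``any minimizer'' clause honestly --- you invoke global rigidity of lateration frameworks in general position to conclude that every zero-stress configuration is congruent to $\bx'$, a step the paper needs as well (its bound is verified only for the particular minimizer $\eta\bx$) but never makes explicit. One small wording issue: when $v$ is an infinitesimal flex, your limiting ratio is $+\infty$ and ``half this limit'' is undefined; either remark that any finite $a$ then works for small $t$, or note that a lateration graph on points in general position is infinitesimally rigid, so this case does not arise.

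One substantive caveat concerns the constant. The proposition asks for $a$ depending only on $(\cV,\cE)$, whereas your $a$ depends on the framework $\cG(\bx)$ and on $v$, as you acknowledge. This is not a defect you should try to repair. The left-hand side of \eqref{lower1} has dimensions of squared length while $\sum_{(i,j)\in\cE}\eps_{ij}^2$ has dimensions of length to the fourth power (under $x_i \mapsto \lambda x_i$ with dissimilarities scaled accordingly, the former scales as $\lambda^2$ and the latter as $\lambda^4$), so any constant relating them nonvacuously must carry dimensions of inverse squared length and cannot depend on the combinatorial graph alone. The paper's graph-only constant $a = 1/(8\delta)$ traces back to an algebraic slip in \eqref{lower-bound21}: with $\eps_{ij} = (\eta^2-1)\|x_i-x_j\|^2$ one has $\eps_{ij}^2 = (\eta^2-1)^2\|x_i-x_j\|^4$, not $(\eta^2-1)^2\|x_i-x_j\|^2$; carried through correctly, the paper's constant, like yours, depends on the latent configuration. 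So your framework-dependent constant is in fact the correct form of the conclusion.
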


\begin{proof}
    Let $(\cV, \cE)$ be a lateration graph and let $\delta := \max_{i \in [n]}\textup{deg}(v_i)$ be the maximum degree of the nodes in $\cV$. For $0 < \eta \le 1$, let
    \begin{align}
        d_{ij}^2 = \eta^2\|x_i - x_j\|^2\qquad (i, j) \in [n]^2.
    \end{align}
    and set $y^*_1, \dots, y^*_n$ to be $y^*_i := \eta x_i$ for each $i \in [n]$. Then, it is straightforward to verify that $\|y^*_i - y^*_j\|^2 = d_{ij}^2$ for all $(i, j) \in \cE$, and therefore, $y^*_1, \dots, y^*_n$ is a minimizer of the stress \eqref{stress}. Moreover, by construction, since $x_i \mapsto y^*_i$ is a scale transformation, and the optimal rigid transformation $g$ is the identity map; therefore
    \begin{align}\label{lower-bound1}
        \min_g \sum_{i \in [n]} \|y^*_i - g(x_i)\|^2 = (1-\eta)^2\sum_{i \in [n]}\|x_i\|^2.
    \end{align}
    For $\eps_{ij}^2 = (d_{ij}^2 - \|x_i-x_j\|^2)^2$, we have
    \begin{align}
        \sum_{(i,j) \in \cE} \eps_{ij}^2
        &= \sum_{(i,j) \in \cE} (\eta^2-1)^2 \cdot \|x_i - x_j\|^2 \label{lower-bound21} \\
        &\le (1+\eta)^2(1-\eta)^2 \cdot 2\sum_{(i,j) \in \cE} \big(\|x_i\|^2 + \|x_j\|^2\big) \\
        &\le 8(1-\eta)^2 \delta \sum_{i \in [n]} \|x_i\|^2,
        \label{lower-bound2}
    \end{align}
    where the first inequality uses $\|x_i-x_j\|^2 \le 2(\|x_i\|^2 + \|x_j\|^2)$, and the second inequality follows from the fact that $(1+\eta)\le 2$ for $0 < \eta \le 1$, and 
    \begin{align}
        \sum_{(i,j) \in \cE}(\|x_i\|^2 + \|x_j\|^2) = \sum_{i \in [n]}\text{deg}(v_i) \|x_i\|^2 \le \delta \sum_{i \in [n]} \|x_i\|^2.
    \end{align}
    Combining \eqref{lower-bound1} and \eqref{lower-bound2}, for $a = 1/8\delta$ we get
    \begin{align}
        \min_g \sum_{i \in [n]} \|y^*_i - g(x_i)\|^2 \ge a \sum_{(i,j) \in \cE} \eps_{ij}^2,
    \end{align}
    which completes the proof. Moreover, when 
    $$
    1 - \frac{\sigma}{\sqrt{ \sum_{(i,j) \in \cE} \|x_i - x_j\|^2 }} \le \eta^2 \le 1,
    $$
    it follows from \eqref{lower-bound21} that $\sum_{(i,j) \in \cE} \eps_{ij}^2 \le \sigma^2$.
\end{proof}

\section{Random geometric graphs}
\label{sec:rgg}

In the literature, the main stochastic model is a {\em random geometric graph}~\cite{penrose2003random}. Such a graph has node set representing points that are drawn iid from some distribution on $\bbR^p$ and edges between any two of these points within distance $r$. For example, Aspnes~et~al.~\cite{aspnes2006theory} show that, for the uniform distribution on $[0,1]^2$, as the size of the configuration increases, if the connectivity radius is not too small, the probability that the resulting graph is generically globally rigid, and that the corresponding framework is globally rigid, tends to one. We generalize their result. 

\begin{theorem}
\label{thm:rgg}
Suppose a configuration of cardinality $n$ is drawn iid from a distribution $P$ with $\textup{supp}(P) = \bar\Omega \subset \R^p$ where $\Omega$ is bounded, open, and connected. Considering the asymptotic regime $n\to\infty$, there is $r_n \to 0$ such that a graph built on this configuration with a connectivity radius $r \ge r_n$ is a lateration graph with probability tending to one.  
\end{theorem}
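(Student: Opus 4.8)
The plan is to construct, with probability tending to one, an explicit laterative ordering by a grid/cell argument, generalizing the occupancy-plus-connectivity strategy that works for the uniform law on the square.

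First I would fix a radius $r$ and tile $\mathbb{R}^p$ by half-open cubes (cells) of side $s = r/(3\sqrt{p})$. Two elementary geometric facts drive everything: (a) any single cell has diameter $s\sqrt{p} = r/3 < r$, so all sampled points lying in one cell are pairwise within $r$ and hence form a clique; and (b) if two cells are \emph{adjacent} (their closures meet, diagonal adjacency included), their union lies in a cube of side $2s$, of diameter $2s\sqrt{p} = 2r/3 < r$, so every point of one is within $r$ of every point of the other. Call a cell \emph{relevant} if it meets the open set $\Omega$. Because $\Omega \subseteq \mathrm{supp}(P)$, every relevant cell $C$ satisfies $P(C) \ge P(C\cap\Omega) > 0$, and since $\Omega$ is bounded there are only $O_\Omega(r^{-p})$ relevant cells, so $\beta(r) := \min\{P(C): C \text{ relevant}\} > 0$.

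Second, occupancy. I would call a relevant cell \emph{good} if it receives at least $p+1$ of the $n$ sampled points. The count in a relevant cell is $\mathrm{Bin}(n,P(C))$ with $P(C)\ge\beta(r)$, so the probability it is not good is at most the binomial lower tail $\Pr(\mathrm{Bin}(n,\beta(r))\le p)$, which tends to $0$ as $n\to\infty$ for each fixed $r$; a union bound over the $O_\Omega(r^{-p})$ relevant cells shows that the probability that some relevant cell fails to be good also tends to $0$ as $n\to\infty$ for each fixed $r$. Since only the support of $P$ is prescribed, $\beta(r)$ may decay at an uncontrolled rate as $r\to 0$, so I cannot exhibit an explicit radius; instead I invoke a diagonalization: taking $r = 1/k$ and letting $N_k$ (strictly increasing) be large enough that the union bound is below $1/k$ for all $n\ge N_k$, I set $r_n := 1/k$ for $N_k \le n < N_{k+1}$, so that $r_n \to 0$ and, with radius $r=r_n$, all relevant cells are good with probability tending to one.

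Third, connectivity and the ordering. As $\Omega$ is open and connected it is path-connected, and tracing a path between points of two relevant cells shows, via the path's compactness and the adjacency of the successive cells it crosses, that the relevant cells form a connected subgraph under cell-adjacency. On the event that every relevant cell is good, I build the laterative ordering from a breadth-first traversal of this graph: embed first the $\ge p+1$ points of a root cell (a clique by (a)); then, visiting each cell $C$ immediately after an already-visited adjacent cell $C'$, list the points of $C$ in any order, each being joined by (b) to the $\ge p+1$ points of $C'$ and thus having at least $p+1$ already-embedded neighbors. Every sampled point lies in $\bar\Omega$ and hence—after a harmless perturbation of the grid ensuring no point sits on a cell face—in a relevant cell, so all points are embedded and the ordering is laterative; the graph is a lateration graph. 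Enlarging the radius only adds edges and leaves this ordering valid, giving the statement for all $r\ge r_n$. The step I expect to be the main obstacle is the occupancy analysis under the weak hypothesis $\mathrm{supp}(P)=\bar\Omega$: with no lower bound on the density, $\beta(r)$ can vanish arbitrarily fast and one cannot produce an explicit radius as in the uniform case, so the diagonalization is what salvages the existence of some $r_n\to 0$; the possibly irregular boundary of $\Omega$ is a secondary nuisance, absorbed by the fact that \emph{every} cell meeting $\Omega$ has positive mass and is therefore made good once $n$ is large, requiring no special treatment near $\partial\Omega$.
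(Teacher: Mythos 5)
Your construction is, in essence, the paper's own proof in different clothing: the paper covers $\Omega$ by finitely many open balls of radius $1/(2m)$ rather than cubes, extracts a depth-first-search path through the ``cover graph'' of overlapping pieces, demands at least $p+1$ sample points in each intersection of consecutive pieces (each a nonempty open subset of $\Omega$, hence of positive $P$-mass), and concludes with the same binomial-tail-plus-union bound, the same diagonalization in the pair (sample size, resolution), and the same monotonicity-in-$r$ remark. Your cube tiling with closure-adjacency and BFS traversal is an interchangeable variant, and your geometric and probabilistic steps check out --- with one exception.

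The exception is the step you yourself label ``harmless'': the perturbation of the grid. Your occupancy bound in step two is a probability statement about a \emph{fixed} grid, chosen before the sample is drawn, whereas the perturbation in step three is chosen \emph{after} seeing the sample. Occupancy is not preserved by perturbing the grid: a relevant cell containing exactly $p+1$ points, all near one face, can lose some of them to a neighboring cell under an arbitrarily small shift while remaining relevant, so the event you actually use (``every relevant cell of the perturbed grid is good'') is not the event whose probability you bounded. The difficulty the perturbation is meant to cure is real and cannot simply be dropped: under the sole hypothesis $\mathrm{supp}(P)=\bar\Omega$, the measure $P$ may charge $\partial\Omega$, and a sample point of $\partial\Omega$ lying on a cell face may sit in a cell that does not meet $\Omega$. (For what it is worth, the paper's own write-up silently skips the same boundary issue, since its pieces $A_j=B_j\cap\Omega$ also fail to contain sample points lying on $\partial\Omega$.) Two clean repairs are available. (i) Fix the grid offset a priori: by Fubini, Lebesgue-almost every offset $u\in[0,s)^p$ makes the union of cell faces a $P$-null set, and for such a fixed grid almost surely no sample point lies on a face, after which your argument runs verbatim. (ii) Keep an arbitrary fixed grid and append the problematic points at the very end of the ordering: if $x\in\partial\Omega$ lies in a non-relevant cell, then for small $\delta$ the ball $B(x,\delta)$ is covered by the closures of the cells whose closures contain $x$, and $B(x,\delta)\cap\Omega$ contains a point off all faces, so $x$ lies in the closure of some relevant cell $C'$; hence $x$ is within $\mathrm{diam}(\overline{C'})=r/3<r$ of each of the at least $p+1$ already-embedded points of $C'$, and may be laterated from them. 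Either repair closes the gap.
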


The conditions on the support of the distribution generating the locations of the sensors are very mild. The requirement for $\text{supp}(P)$ to be the closure of an bounded, open set $\Omega$ is satisfied, for instance, by any distribution which admits a  enforces the required genericity of the configuration, since this disallows the points from being sampled from an algebraic curve in $\R^p$. We could even relax the condition that $\Omega$ is connected as long as the connectivity radius $r$ exceeds the maximum separation between its connected components. 

\begin{proof}
Let $\cG_r(\bx)$ be the neighborhood graph with connectivity radius $r$ built on the point set $\bx = \{x_1, \dots, x_n\}$. It is obvious that the property of being a lateration graph is monotonic in $r$ in the sense that if $\cG_r(\bx)$ is a lateration graph then so is $\cG_s(\bx)$ for any $s > r$. It therefore suffices to find $r_n \to 0$ such that $\cG_{r_n}(x_1, \dots, x_n)$ is lateration graph with probability tending to 1. (All limits are as $n\to\infty$ unless otherwise specified.)

$\Omega$ being bounded, for any $m \ge 1$ integer, it can be covered with finitely many, say $N_m$, open balls of radius $1/2m$ centered on points belonging to $\Omega$. 
(We even know that the minimum number $N_m$ satisfies $N_m \le C_0 m^p$, where $C_0$ depends on $\diam(\Omega)$ and $p$, although this will not play a role in what follows.)
We consider such a covering, with balls denoted $B^m_1, \dots, B^m_{N_m}$. Let $A_j := B_j \cap \Omega \ne \emptyset$ for all $j$.
Form the following graph: the node set is $A^m_1, \dots, A^m_{N_m}$, and $A^m_j$ and $A^m_k$ are connected if they intersect. We call this the {\em cover graph}.
Because $\Omega$ is connected, the cover graph must also be connected, and may therefore be traversed by, say, depth-first search, which starting at any $A^m_{j_0}$ results in a (finite) path in the cover graph that passes through the entire graph, meaning, a sequence $(A^m_{j_s}: s = 0, \dots, S_m)$ with $I^m_s := A^m_{j_{s-1}} \cap A^m_{j_s} \ne \emptyset$ for all $s$, with the property that, for any $j$, there is $s$ such that $A^m_{j_s} = A^m_j$.
Note that, by construction, each $I^m_s$ is a nonempty open subset of $\Omega$ of diameter $<1/m$; together, these subsets cover $\Omega$. (Note that some of these sets might coincide, but this is unimportant.)

Now, let $x_1, \dots, x_n$ denote an iid sample from $P$, and for a Borel set $A$, 
{let $P(A) = P(\{x_i \in A\})$.}
Consider the event $E^{n,m}_s$ that $I^m_s$ contains at least $p+1$ sample points, and define $E^{n,m} = \cap_s E^{n,m}_s$, which is the event that each one of the subsets $I^m_1, \dots, I^m_{S_m}$ contains at least $p+1$ sample points. Let $a(n,m) = 1 - \bbP(E^{n,m})$, which is the probability that $E^{n,m}$ fails to happen. Note that, essentially by definition, $a(n,m)$ is decreasing in $n$. In addition to that, we also have $\lim_{n\to\infty} a(n,m) = 0$. To see this, we derive, by the union bound and the fact that the number of points falling in a Borel set $A$ is binomial with parameters $n$ and $P(A)$,
\begin{align}
a(n,m)
&\le \sum_{s=0}^{S_m} \big(1 - \bbP(E^{n,m}_s)\big) \\
&\le \sum_{s=0}^{S_m} \sum_{k=0}^p \binom{n}{k} P(I^m_s)^k (1-P(I^m_s))^{n-k} \\
&\le (S_m+1) (p+1) n^p (1-b_m)^{n-p}, \qquad
b_m := \min_{s=0,\dots,S_m} P(I^m_s).
\end{align}
Since each $I^m_s$ is a nonempty open subset of $\Omega$, we have that $b_m > 0$, and so $a(n,m) \to 0$ as $n\to\infty$ when $m$ remains fixed. (The convergence is exponentially fast, although this will not play a role.)
The fact that $a(n,m)$ is decreasing in $n$ and $\lim_{n\to\infty} a(n,m) = 0$ implies, via elementary arguments, that there is sequence $m_n \to \infty$ such that $\lim_{n\to\infty} a(n,m_n) = 0$, or equivalently, $\bbP(E^{n,m_n}) \to 1$ as $n\to\infty$. 

We now prove that, under $E^{n,m}$, the neighborhood graph built on the sample points $x_1, \dots, x_n$ with connectivity radius $r=1/m$ is a lateration graph. Thus, we work under the situation where each $I^m_s$ contains at least $p+1$ sample points. 
First, consider $p+1$ such points in $I^m_1$, and label them $v_1, \dots, v_{p+1}$ in any order. Since $\diam(I^m_1) < r$, the subgraph that these points induce is complete. Recall that $I^m_1 \subset A^m_{j_0}$.   Label the remaining points in $A^m_{j_0}$ as $v_{p+1}, \dots, v_{n_0}$ and note that, since $\diam(A^m_{j_0}) < r$, each of these points is connected to all the points $v_1, \dots, v_{p+1}$. Let $\cV_0$ denote $\{v_1, \dots, v_{n_0}\}$. 
Similarly, recall that $I^m_1 \subset A^m_{j_1}$; label the remaining points in $A^m_{j_1}$ as $v_{n_0+1}, \dots, v_{n_1}$, and note that, since $\diam(A^m_{j_1}) < r$, each of these points is connected to all the points $v_1, \dots, v_{p+1}$, and therefore to at least $p+1$ points inside $\cV_0$; let $\cV_1 := \{v_1, \dots, v_{n_1}\}$.
Suppose that we are at a stage where we have built an ordering $\cV_{s-1} = \{v_1, \dots, v_{n_{s-1}}\}$ of the sample points in $A^m_{j_{0}}, \dots, A^m_{j_{s-1}}$ such that, for each $p+1 < j \le j_{s-1}$, $v_j$ is connected to at least $p+1$ points among $v_1, \dots, v_{j-1}$. In particular, this includes all the points in $I^m_s$ since $I^m_s \subset A^m_{j_{s-1}}$. Now, $I^m_s \subset A^m_{j_s}$ also; label the remaining points in $A^m_{j_s}$ as $v_{n_{s-1}+1}, \dots, v_{n_s}$, and since $\diam(A^m_{j_s}) < r$, each of these points is connected to all the points $I^m_s$. Since $I^m_s$ contains at least $p+1$ points (because $E^{n,m}$ holds), we may continue the recursion by letting $\cV_s = \{v_1, \dots, v_{n_{s}}\}$. Doing so until all the sample points have been processed provides a laterative ordering of the entire neighborhood graph $\cG_r(x_1, \dots, x_n)$.
\end{proof}

\begin{figure}[t!]
    \centering
    \includegraphics[width=0.8\textwidth]{./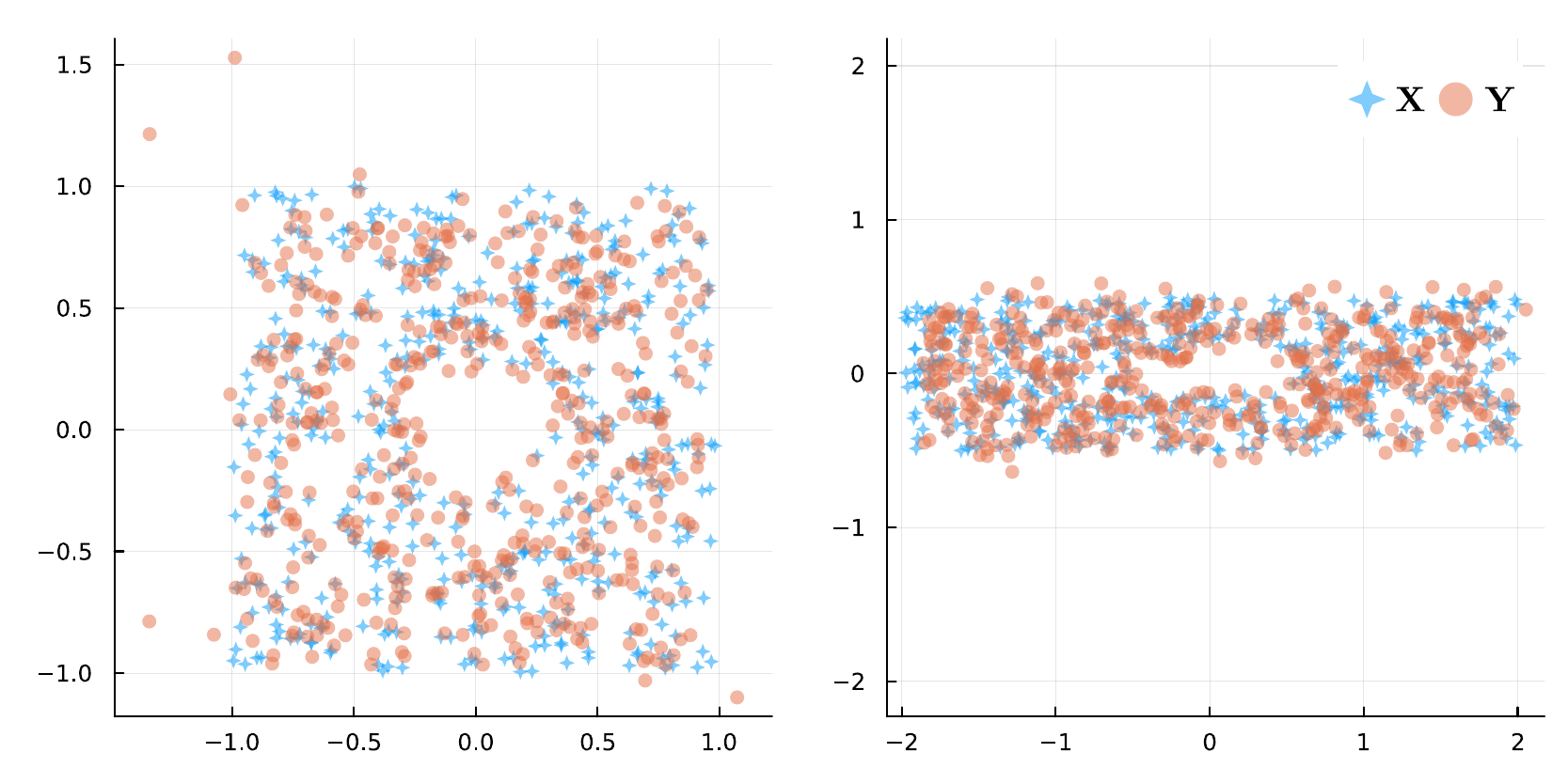}
    \caption{Examples of latent configurations $x_1, x_2, \dots, x_n$ and the embedding $y_1, y_2, \dots, y_n$ obtained from sequential lateration when (left) $h=0.5$ and $\kappa=1$, and when (right) $h=0.5$ and $\kappa=2$. The model is \eqref{setting}, with $\eps_{ij} \sim N(0, \varsigma^2)$ for $\varsigma^2=0.1$.}
    \label{fig:example}
    \end{figure}
    
\begin{figure}[ht!]
    \centering
    \begin{subfigure}{0.32\textwidth}
        \caption{Connectivity radius $r$}
        \includegraphics[width=\textwidth]{./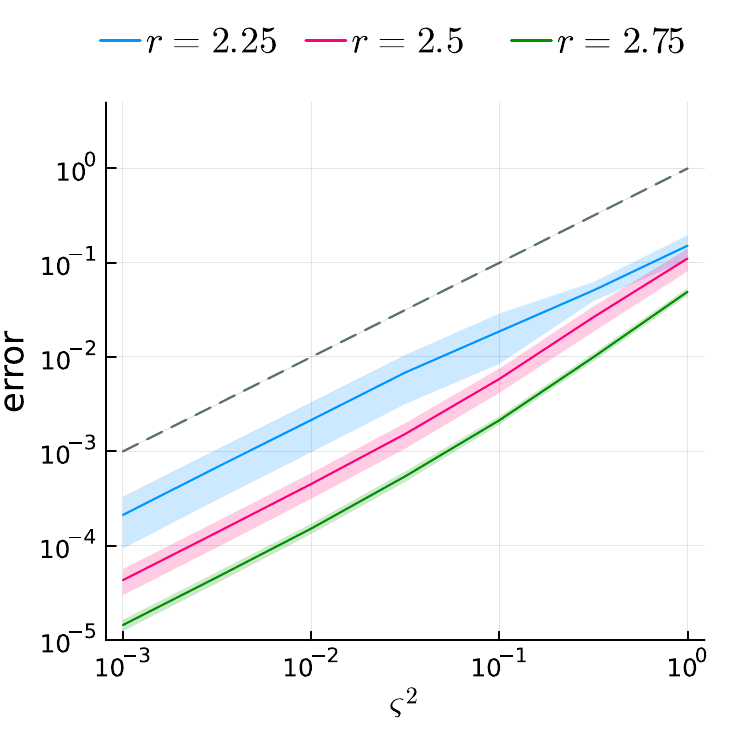}
        \label{subfig:p2}
    \end{subfigure}
    \begin{subfigure}{0.32\textwidth}
        \caption{Aspect-ratio $\kappa$}
        \includegraphics[width=\textwidth]{./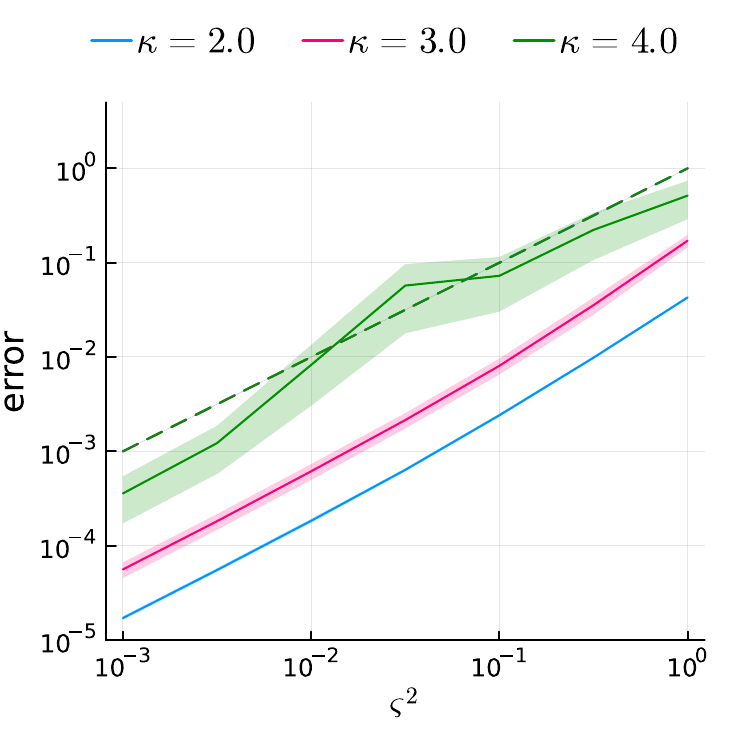}
        \label{subfig:p3}
    \end{subfigure}
    \begin{subfigure}{0.32\textwidth}
        \caption{Hollowing out $h^2$}
        \includegraphics[width=\textwidth]{./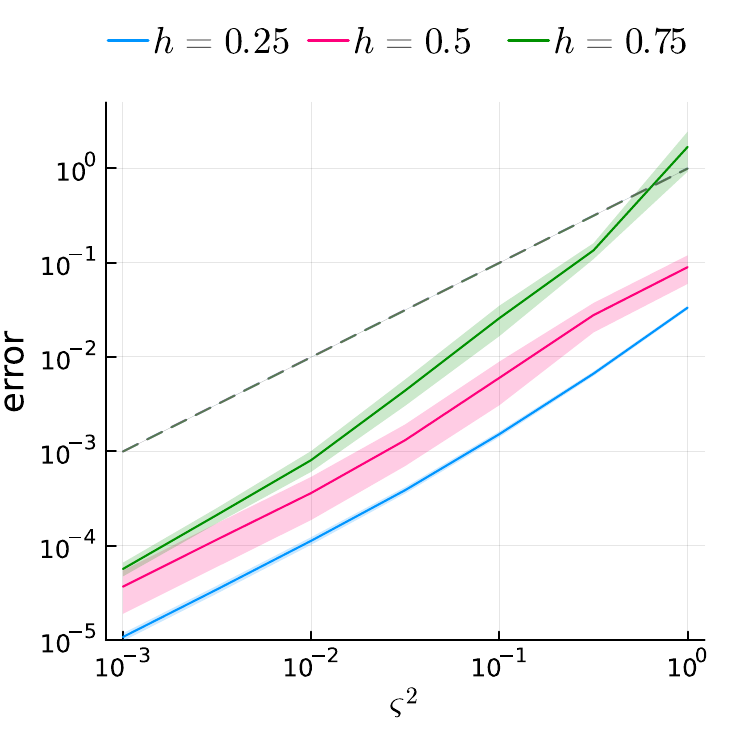}
        \label{subfig:p4}
    \end{subfigure}
    \caption{Results of the numerical experiments. The vertical axis in all plots is the embedding error and the horizontal axis is the variance of the noise, $\varsigma^2$. The results are shown on a log-log scale. {The dashed line in (a),~(b)~and~(c) is the $45^\circ$ line corresponding to the mean perturbation $s(\eps)^2$ defined in \eqref{s}.}}
    \label{fig:experiments}
\end{figure}

\begin{figure}[t!]
    \centering
    \begin{subfigure}{0.37\textwidth}
        \centering
        \caption{Comparison of performance}
        \includegraphics[width=\textwidth]{./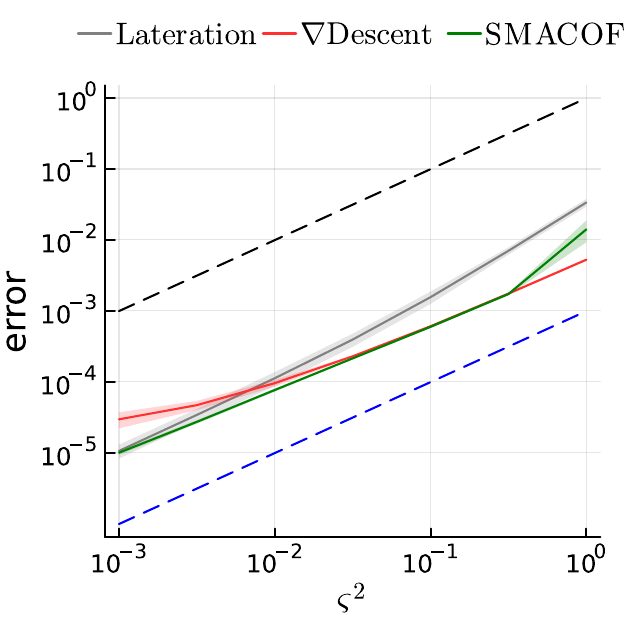}
        \label{subfig:comparison}
    \end{subfigure}
    \quad\quad
    \begin{subfigure}{0.37\textwidth}
        \centering
        \caption{Computational time}
        \includegraphics[width=\textwidth]{./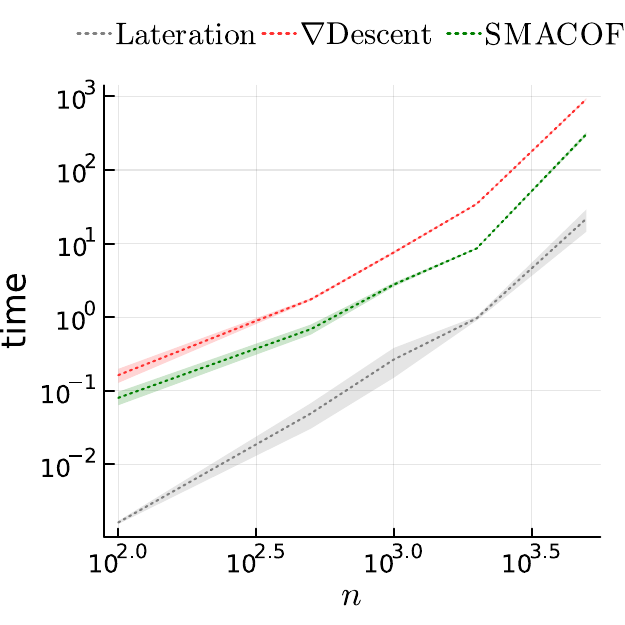}
        \label{subfig:time}
    \end{subfigure}
    \caption{(a) Comparison of the embedding error for SMACOF and Gradient Descent shown on a log-log scale. The black dashed line corresponds the mean perturbation, $s(\eps)^2$, and the blue dashed line is a plot of $s(\eps)^2/10^3$ which provides evidence of a lower bound for the embedding error. (b) Computational time for sequential lateration, SMACOF and Gradient Descent for varying sample sizes $n$.}
    \label{fig:comparison}
\end{figure}

\section{Numerical experiments}
\label{sec:numerics}

We probe the accuracy of the stability bound in \thmref{stability} in the following numerical experiments. We begin by noting that the constants $\sigma, A > 0$ in  \thmref{stability} depend on the graph $\cG$ and the latent configuration ${x_1, x_2, \dots, x_n}$. In particular, for a fixed graph $\cG$ the constant $A$ depends on the aspect-ratio $(\rho/\omega)^2$. 

Therefore, in order to investigate the stability bound, we consider the setting where the latent configuration $x_1, x_2, \dots, x_n$ is drawn \textit{iid} from a uniform distribution on the domain $\Omega(h, \kappa)$, where for $h \in (0, 1)$ and scale $\kappa > 0$, 
\[\Omega(h, \kappa) := [-\kappa, \kappa] \times [-\kappa^{-1}, \kappa^{-1}] \setminus [-h\kappa, h\kappa] \times [-h\kappa^{-1}, h\kappa^{-1}],\]
is a rectangle with aspect ratio $\kappa^2 \in (0, 1)$ and a fraction $h^2 \in (0, 1)$ of its area hollowed out from the center. The parameters $h$ and $\kappa$ together account for the complexity of the latent configuration. 

We consider the setting where the dissimilarities are corrupted by additive noise $\eps_{ij}$, i.e., $d_{ij}^2 = \max\{\|x_i - x_j\|^2 + \eps_{ij}, 0\}$, where $\eps_{ij}$ are drawn \textit{iid} from $N(0, \varsigma^2)$. 
See Figure~\ref{fig:example} for an illustration. {Given the graph $\cG$ with dissimilarities $d_{ij}$, we obtain the embedding $y_1, y_2, \dots, y_n$ using the `first' sequential lateration method {described earlier in the paper}. Specifically, we find a suitably large clique in the graph via greedy search and embed it via  classical scaling; we then embed the remaining nodes recursively by classical lateration and compute the embedding error}
$$
\frac{1}{n}\sum_{i \in [n]}\| y_i - \widehat{g}(x_i)\|^2,
$$
where the rigid transformation $\widehat{g}: \R^{p} \to \R^p$ is obtained via Procrustes alignment \cite{arias2020perturbation}. In all experiments we compare the embedding error of $y_1, y_2, \dots, y_n$ to the mean perturbation,
\begin{equation}
\label{s}
s(\eps)^2 := \frac{1}{|\cE|} \sum_{(i,j) \in \cE} \eps_{ij}^2,
\end{equation}
which is the normalized bound on the right hand side of \eqref{lateration1} in \thmref{sequential lateration}. Note that, when $|\cE|$ is large, $s(\eps)^2 \approx \mathbb{E}(\eps_{ij}^2) = \varsigma^2$ by the law of large numbers.

\medskip

The results are summarized in Figure~\ref{fig:experiments}. For fixed $n=500, h=0.2$ and $\kappa=1$, Figure~\ref{subfig:p2} shows the effect of the effect of the connectivity radius ($r \in \{2.25, 2.5, 2.75\}$) of the random geometric graph on the accuracy of the bound. Figure~\ref{subfig:p3} illustrates the effect of the aspect ratio ($\kappa \in \{2, 3, 4\}$) at a fixed connectivity radius of $r=0.3$. Lastly, for fixed $\kappa=1$ and $r=0.3$, Figure~\ref{subfig:p4} shows the effect of the hollowing out ($h \in \{0.25, 0.5, 0.75\}$) of the domain of the latent configuration. In all cases, the results corroborate the bound established in \thmref{sequential lateration}. Furthermore, as seen in the plots, the constants are likely larger for more complex latent configurations, i.e., when $r$ is small, $\kappa$ is small, or when $h$ is large.

Figure~\ref{fig:comparison} investigates the accuracy of the bound established in \thmref{stability}. We compare the embedding error of the sequential lateration method to the embedding error from stress-minimizers, $y_1^*, y_2^*, \dots, y_n^*$, obtained using (i) gradient descent, and (ii) the SMACOF algorithm \citep{de2009multidimensional}. Figure~\ref{subfig:comparison} shows the embedding error of the three methods compared to the mean perturbation $s(\eps)^2$ (the black dashed line), and confirms the bound in \thmref{stability}. While the embedding error of $y_1^*, y_2^*, \dots, y_n^*$ marginally improves on the embedding error of sequential lateration, the advantage of the sequential lateration procedure is the reduced computational time which, as shown in Figure~\ref{subfig:time}, is between one to two orders of magnitude faster than SMACOF and gradient descent.

\section{Discussion}
\label{sec:discussion}

Our main contribution in this paper is a perturbation bound for sequential lateration. This provides a way to understand and, to some extent, quantify the stability of sequential lateration in the presence of noise. As a corollary, we obtained a perturbation bound for stress minimization in the setting of a lateration graph. As we mentioned earlier, this addresses the issue of noise in multidimensional scaling / network localization discussed and formulated as a set of open questions by Mao~et~al.~\cite{mao2007wireless} in their well-known review paper.

The bounds stated in \thmref{sequential lateration} and \thmref{stability} are not explicit, and at least when in comes to the stability of sequential lateration, it might be of interest to obtain a more explicit bound. In fact, we did this in the proof of \thmref{sequential lateration} --- see \eqref{A_bound} and \eqref{sigma_bound}. However, our analysis can only be accurate, if at all, in the very worst case, and is not representative of situations such as that of a random geometric graph (\secref{rgg}) where it is often possible to embed many nodes based on the same already embedded nodes, and doing this as much as possible avoids the accumulation of error. We also note that, to remove noise, one can do better by embedding by classical scaling many nodes at once, not just the minimum required number of $p+1$ nodes.

While we have focused on stability to noise, a related but distinct issue is the presence of outliers, by which we mean gross errors (i.e., some of the error terms $\eps_{ij}$ in \eqref{setting} could be quite large). There are robust methods\footnote{We use the term `robust' in the way it is used in statistics. In the broad MDS literature, this term is sometimes used to mean what we refer to here as stability to noise.} for MDS, e.g., \cite{heiser1988multidimensional, cayton2006robust}, but their robustness properties are not well-understood. Converting the available (metric) data into ordinal data, by replacing $d_{ij}$ by its rank among all dissimilarities $(d_{kl})_{(k,l) \in \cE}$, and then applying a method for ordinal MDS is likely to yield a robust method, but the robustness of such methods are also poorly understood. For some effort in this direction, see \cite{jain2016finite}.

\subsection*{Acknowledgements} 
This work was partially supported by the US National Science Foundation (DMS 1916071).

\small
\bibliographystyle{chicago}
\bibliography{../ref2}

\end{document}